\DeclareMathOperator{\Proj}{Proj}
\DeclareMathOperator{\Syz}{Syz}
\DeclareMathOperator{\height}{ht}
\DeclareMathOperator{\Tr}{Tr}
\DeclareMathOperator{\chara}{char}
\DeclareMathOperator{\Spec}{Spec}
\DeclareMathOperator{\depth}{depth}
\newcommand{\eps}{\varepsilon}
\begin{document}

\swapnumbers
\theoremstyle{definition}
\newtheorem{Le}{Lemma}[section]
\newtheorem{Def}[Le]{Definition}
\newtheorem*{DefB}{Definition}
\newtheorem{Bem}[Le]{Remark}
\newtheorem{Ko}[Le]{Corollary}
\newtheorem{Theo}[Le]{Theorem}
\newtheorem*{TheoB}{Theorem}
\newtheorem{Bsp}[Le]{Example}
\newtheorem{Be}[Le]{Observation}
\newtheorem{Prop}[Le]{Proposition}
\newtheorem{Sit}[Le]{Situation}
\newtheorem{Que}[Le]{Question}
\newtheorem*{Con}{Conjecture}
\newtheorem{Dis}[Le]{Discussion}
\newtheorem{Prob}[Le]{Problem}
\newtheorem{Konv}[Le]{Convention}
\title{Dagger closure in regular rings containing a field}
\author{Holger Brenner}
\author{Axel St\"abler}
\address{Holger Brenner, Universit\"at Osnabr\"uck, Fachbereich 6: Mathematik/Informatik,
Albrechtstr. 28a,
49069 Osnabr\"uck, Germany}
\email{hbrenner@uni-osnabrueck.de}

\address{Axel St\"abler, Universit\"at Osnabr\"uck, Fachbereich 6: Mathematik/Informatik,
Albrechtstr. 28a,
49069 Osnabr\"uck, Germany}
\curraddr{Johannes Gutenberg-Universit\"at Mainz\\ Fachbereich 08\\
Staudingerweg 9\\
55099 Mainz\\
Germany}
\email{axel.staebler@uni-osnabrueck.de}


\subjclass[2010]{Primary 13A35, 13A18; Secondary 13D45}
\begin{abstract}
We prove that dagger closure is trivial in regular domains containing a field and that graded dagger closure is trivial in polynomial rings over a field. We also prove that Heitmann's full rank one closure coincides with tight closure in positive characteristic under some mild finiteness conditions. Furthermore, we prove that dagger closure is always contained in solid closure and that the forcing algebra for an element contained in dagger closure is parasolid.
\end{abstract}

\maketitle

\section*{Introduction}
Dagger closure is an attempt to provide a tight closure like closure operation that is valid in any characteristic without reducing to positive characteristic. More precisely, an element belongs to the dagger closure of an ideal in a domain $R$ if it is multiplied into the extended ideal in the absolute integral closure (i.\,e.\ the integral closure of $R$ in an algebraic closure of its field of fractions) by ``arbitrarily small'' elements. In order to make sense of the notion ``arbitrarily small'' valuations are used.

Dagger closure was first introduced by Hochster and Huneke in \cite{hochsterhunekedagger} for a complete local domain of positive characteristic. There they also proved that dagger closure coincides with tight closure in this setting [ibid.,Theorem 3.1]. Also Heitmann's \emph{full rank one closure} which he used to prove the direct summand conjecture in mixed characteristic in dimension $3$ (cf.\ \cite{heitmanndirectsummand}) is a variant of dagger closure tailored to mixed characteristics.

One important feature of such a tight closure like operation should be that it is trivial on ideals in regular rings, since this property opens relations to singularity theory. It is also a crucial feature with respect to the direct summand conjecture in mixed characteristic. 
In this paper we prove this property for dagger closure for regular rings containing a field (see Theorem \ref{daggerregularclose}). The argument we provide uses the existence of big Cohen Macaulay algebras.  

In \cite{brennerstaeblerdaggersolid} we prove that a graded variant of dagger closure agrees with solid closure in graded dimension $2$. And in \cite{staeblerdaggerabelian} the second author proves an inclusion result for certain section rings of abelian varieties. This implies in particular that dagger closure is non-trivial in all dimensions.

This article is based on parts of the Ph.D.\ thesis of the second author (\cite{staeblerthesis}). Related ideas concerning dagger closure and ``almost zero'' have also been studied in \cite{robertssinghannihilators} and \cite{asgbhattaremakrsonbcm}. 

\section{Preliminaries}
In this section we recall the original definition of dagger closure by Hochster and Huneke and the graded version of dagger closure as defined in \cite{brennerstaeblerdaggersolid}. Then we introduce a definition of dagger closure for arbitrary domains and relate it and the graded version to the original definition of Hochster and Huneke. We will also prove that they both coincide with tight closure in positive characteristic. 
Our definition is in principle applicable if the ring does not contain a field but we will not prove any results in this case.

We recall that for a noetherian domain $R$ the absolute integral closure $R^+$ of $R$ is the integral closure of $R$ in an algebraic closure of its field of fractions $Q(R)$. By a result of Hochster and Huneke (\cite[Lemma 4.1]{hochsterhunekeinfinitebig}) there is for an $\mathbb{N}$-graded domain $R$ a maximal $\mathbb{Q}_{\geq 0}$-graded subring of $R^+$ which extends the grading of $R$ -- we will denote this ring by $R^{+ \text{GR}}$.

We can now state the definition of Hochster and Huneke.

\begin{Def}
\label{daggerHH}
Let $(R, \mathfrak{m})$ be a complete local domain. Fix a valuation $\nu$ on $R$ with values in $\mathbb{Z}$ such that $\nu(R) \geq 0$ and $\nu(\mathfrak{m}) >0$. Extend $\nu$ to $R^+$ so that it takes values in $\mathbb{Q}$. Then an element $f \in R$ belongs to the \emph{dagger  closure $I^\dagger$} of an ideal $I \subseteq R$ if for all positive $\eps$ there exists an element $a \in R^+$ with $\nu(a) < \eps$ such that $af \in IR^+$.
\end{Def}

\begin{Prop}
\label{PValuationbyGrading}
Let $R$ be a $\mathbb{Q}$-graded domain. The map $\nu: R \setminus \{ 0 \} \to \mathbb{Q}$ sending $f \in R \setminus \{0\}$ to $\deg f_i$, where $f_i$ is the minimal homogeneous component of $f$ induces a valuation on $R$ with values in $\mathbb{Q}$. This valuation will be referred to as the \emph{valuation induced by the grading}.
\end{Prop}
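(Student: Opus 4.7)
The plan is to verify the two defining axioms of a valuation: multiplicativity $\nu(fg) = \nu(f) + \nu(g)$, and the non-Archimedean inequality $\nu(f+g) \geq \min(\nu(f),\nu(g))$ whenever $f+g \neq 0$. Since $R$ is $\mathbb{Q}$-graded, every nonzero element of $R$ is, by the definition of the grading, a finite sum of nonzero homogeneous components of pairwise distinct degrees; hence a component of smallest degree $f_i$ exists and $\nu(f) := \deg f_i$ is well defined on $R \setminus \{0\}$.

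For multiplicativity, I would write $f = f_i + \sum_{k>i} f_k$ and $g = g_j + \sum_{\ell>j} g_\ell$ with $\nu(f)=i$ and $\nu(g)=j$, expand the product, and collect homogeneous pieces. Every nonzero homogeneous component of $fg$ has degree at least $i+j$, and the component of degree exactly $i+j$ is $f_i g_j$; since $R$ is a domain, $f_i g_j \neq 0$, so $\nu(fg) = i+j = \nu(f)+\nu(g)$. For the non-Archimedean inequality, assume without loss of generality $\nu(f) \leq \nu(g)$. If $\nu(f) < \nu(g)$, then the component of minimal degree of $f+g$ is $f_i \neq 0$, so $\nu(f+g) = \nu(f)$. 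If $\nu(f)=\nu(g)=i$, the degree-$i$ component of $f+g$ is $f_i + g_i$; if this is nonzero then $\nu(f+g)=i$, otherwise the minimal component of $f+g$ sits in strictly higher degree and $\nu(f+g)>i$. In both cases the inequality holds.

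The argument presents no substantial obstacle; essentially the only genuine use of hypothesis beyond the finite decomposition into homogeneous pieces is the domain property, which guarantees $f_i g_j \neq 0$ in the multiplicativity step. To obtain a valuation on $R$ in the usual sense one finally extends $\nu$ to the field of fractions by $\nu(f/g) := \nu(f)-\nu(g)$, which is well defined and additive precisely by the multiplicativity just established.
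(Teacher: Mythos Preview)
Your proof is correct and is precisely the standard verification that the paper alludes to; the paper itself simply writes ``Standard'' and omits the details. There is nothing to add or correct.
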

\begin{proof}
Standard.
\end{proof}

\begin{Def}
\label{Defgradedagger}
Let $R$ denote an $\mathbb{N}$-graded domain and let $I$ be an ideal of $R$. Let $\nu$ be the valuation on $R^{+ \text{GR}}$ induced by the grading on $R$. Then an element $f$ belongs to the \emph{graded dagger closure} $I^{\dagger \text{GR}}$ of an ideal $I$ if for all positive $\varepsilon$ there exists an element $a \in R^{+ \text{GR}}$ with $\nu(a) < \varepsilon$ such that $af \in IR^{+ \text{GR}}$.
If $R$ is not a domain we say that $f \in I^{\dagger \text{GR}}$ if $f \in (IR/P)^{\dagger \text{GR}}$ for all minimal primes $P$ of $R$.\footnote{Note that this is well-defined since the minimal primes are homogeneous (see \cite[Lemma 1.5.6 (a)]{brunsherzog}).}
\end{Def}

\begin{Def}
\label{dagger}
Let $R$ be a domain and $I$ an ideal. Then an element $f$ belongs to the \emph{dagger closure} $I^\dagger$ of $I$ if for every valuation $\nu$ of rank at most one on $R^+$ and every positive $\eps$ there exists $a \in R^+$ with $\nu(a) < \eps$ and $af \in IR^+$.
\end{Def} 

We recall that a valuation of rank one is a non-trivial valuation whose value group is contained in $\mathbb{R}$.
A valuation of rank zero is precisely the trivial valuation (see \cite[vol.\,2, VI \S 10 Theorem 15]{zariskisamuel}) and we include this only to force that $0^\dagger = 0$ if $R$ is a field of positive characteristic. Indeed, if for instance $R = \mathbb{F}_p$ then there is no non-trivial valuation on $R$. See also \cite[Remark 1.4]{heitmannplusextended}.
Henceforth, the term ``dagger closure'' refers to Definition \ref{dagger} unless explicitly stated otherwise.

\begin{Bem}
\label{negvaldagger}
In the definition of dagger closure it is enough to only consider valuations $\nu$ such that $R^+ \subseteq R_\nu$, where $R_\nu$ is the valuation ring associated to $\nu$. Indeed, $R^+ \subseteq R_\nu$ means that all elements of $R^+$ have non-negative valuation. If there is an element $a \in R^+$ such that $\nu(a) < 0$ and $0 \neq I \subseteq R$ is an ideal then $I^\dagger = R$ with respect to $\nu$. To see this let $0 \neq f \in I$. Then $a^n f \cdot 1 \in I$ and for $n$ sufficiently large $\nu(a^n f) < 0$.

If $I = 0$ then $I^\dagger = 0$ in any case.
\end{Bem}

\begin{Le}
\label{Lvaluation}
Let $(R, \mathfrak{m}, k)$ be a complete local noetherian domain containing a field. Then there exists a $\mathbb{Q}$-valued valuation $\nu$ on $R$ which is non-negative on $R$ and positive on $\mathfrak{m}$.
\end{Le}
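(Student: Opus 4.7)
The plan is to realize $R$ as a module-finite extension of a power series ring, use the order valuation on the power series ring, and extend it to the fraction field of $R$.

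First I would invoke the Cohen structure theorem: since $R$ is a complete local noetherian domain containing a field, there is a coefficient field $k_0 \subseteq R$ and elements $x_1, \dots, x_d \in \mathfrak{m}$ (a system of parameters) such that $A := k_0[[x_1, \dots, x_d]] \hookrightarrow R$ is a module-finite inclusion. On $A$, I would use the standard order valuation $\nu_0$ that sends a nonzero power series to the total degree of its lowest nonzero homogeneous component; this is a $\mathbb{Z}$-valued valuation on $Q(A)$, non-negative on $A$ and strictly positive on $\mathfrak{m}_A = (x_1, \dots, x_d)$.

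Next, I would extend $\nu_0$ to a valuation $\nu$ on $Q(R)$. Such an extension always exists (e.g., by Chevalley's extension theorem, or by taking any valuation ring of $Q(R)$ dominating the localization of $A_{\nu_0}$ at its maximal ideal). Since $Q(R)/Q(A)$ is a finite field extension and the value group of $\nu_0$ is $\mathbb{Z}$, standard valuation theory (the fundamental inequality $\sum e_i f_i \le [Q(R):Q(A)]$) forces the value group of $\nu$ to be contained in $\frac{1}{e}\mathbb{Z} \subseteq \mathbb{Q}$ for some positive integer $e$.

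It then remains to check the two positivity statements. The valuation ring $V \subseteq Q(R)$ of $\nu$ is integrally closed in $Q(R)$ and contains $A_{\nu_0}$, hence contains the integral closure of $A$ in $Q(R)$; since $R$ is module-finite over $A$, every element of $R$ is integral over $A$, so $R \subseteq V$ and thus $\nu \ge 0$ on $R$. For the statement about $\mathfrak{m}$, I would argue that the center $\mathfrak{p} := \mathfrak{m}_V \cap R$ satisfies $\mathfrak{p} \cap A = \mathfrak{m}_A$ (because $\nu$ extends $\nu_0$, whose center on $A$ is $\mathfrak{m}_A$). By integrality of $R/A$ and lying over, $\mathfrak{p}$ is a maximal ideal of $R$; but $R$ is local, so $\mathfrak{p} = \mathfrak{m}$, which is exactly the assertion $\nu(\mathfrak{m}) > 0$.

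The only slightly delicate step is the last one — ensuring that the center of the extended valuation on $R$ is the \emph{whole} maximal ideal rather than some smaller prime — but this falls out cleanly once one observes that $A$ and $R$ have the same residue characteristic behavior forced by the module-finite inclusion, so locality plus lying-over pins down the center uniquely.
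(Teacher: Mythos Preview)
Your argument is correct. Cohen structure gives the module-finite inclusion $A=k_0[[x_1,\dots,x_d]]\hookrightarrow R$; the $\mathfrak m_A$-adic order on $A$ is a genuine rank-one discrete valuation because $A$ is a regular domain (the product of lowest homogeneous parts is nonzero); any extension of $\nu_0$ to $Q(R)$ has value group of finite index in $\mathbb Z$, hence lies in $\mathbb Q$; integrality forces $R\subseteq V$; and the lying-over step pins the center down to $\mathfrak m$ exactly as you wrote.

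The paper, however, takes a much shorter route: it simply quotes the general fact that any noetherian local domain which is not a field is dominated by a discrete valuation ring (Swanson--Huneke, \emph{Integral Closure}, Theorem~6.3.3). Domination means precisely $R\subseteq V$ and $\mathfrak m_V\cap R=\mathfrak m$, so the resulting $\mathbb Z$-valued valuation is automatically non-negative on $R$ and positive on $\mathfrak m$; the field case is handled separately as trivial. In particular the paper never invokes Cohen structure, never needs to extend a valuation across a finite extension, and does not actually use the hypothesis that $R$ contains a field. What your approach buys is an explicit description of the valuation (it is visibly the extension of the order function), which can be pleasant to have in hand; what the paper's approach buys is brevity and a slightly stronger conclusion (value group $\mathbb Z$ rather than $\tfrac1e\mathbb Z$).
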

\begin{proof}
As $R$ is a noetherian integral domain it is dominated by a discrete valuation ring -- cf. \cite[Theorem 6.3.3]{swansonhunekeintegralclosure}, the case $R$ a field is trivial.
\end{proof}

\begin{Le}
\label{daggerextension}
Let $R \subseteq S$ be an extension of integral domains and $I$ an ideal. Then $I^\dagger \subseteq (IS)^\dagger \cap R$. 
\end{Le}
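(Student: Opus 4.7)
The plan is to compare the valuations on $R^+$ with those on $S^+$ via restriction, after fixing compatible choices of algebraic closure.

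First I would set up an inclusion $R^+ \hookrightarrow S^+$ as follows: choose an algebraic closure $\overline{Q(S)}$ of the fraction field $Q(S)$. Since $Q(R) \subseteq Q(S)$, the algebraic closure of $Q(R)$ inside $\overline{Q(S)}$ may serve as an algebraic closure of $Q(R)$, and this yields a canonical embedding $R^+ \hookrightarrow S^+$. Although the dagger closure does not depend on the chosen algebraic closure (different choices are abstractly isomorphic over $Q(R)$), making the choices compatible is what allows us to move valuations back and forth.

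Next, given any valuation $\nu$ of rank at most one on $S^+$, its restriction $\nu' := \nu|_{R^+}$ is again a valuation on $R^+$ of rank at most one, since the value group of the restriction is a subgroup of the value group of $\nu$ (the trivial case being rank zero, which is explicitly permitted in Definition~\ref{dagger}).

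Now let $f \in I^\dagger$ and fix $\eps > 0$. Applying the definition of $I^\dagger$ to the valuation $\nu'$ on $R^+$ and to $\eps$, we obtain an element $a \in R^+$ with $\nu'(a) < \eps$ and $af \in IR^+$. Via the fixed embedding $R^+ \hookrightarrow S^+$, we view $a \in S^+$, and then $\nu(a) = \nu'(a) < \eps$ together with $af \in IR^+ \subseteq IS^+ = (IS)S^+$. Since $\nu$ and $\eps$ were arbitrary, this shows $f \in (IS)^\dagger$. Combined with the trivial observation $I^\dagger \subseteq R$, we obtain $I^\dagger \subseteq (IS)^\dagger \cap R$.

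There is no real obstacle here; the only point requiring care is the compatible choice of algebraic closures giving $R^+ \subseteq S^+$, after which the proof reduces to restricting valuations, which preserves rank at most one.
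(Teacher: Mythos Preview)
Your argument is correct and is essentially the natural proof: embed $R^+$ into $S^+$ via a compatible choice of algebraic closure, restrict a rank-at-most-one valuation on $S^+$ to one on $R^+$, and use the dagger-closure hypothesis there. The paper does not spell out its own argument but simply refers to \cite[Lemma 1.6]{heitmannplusextended}, whose proof proceeds along exactly these lines, so your proposal matches the intended approach.
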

\begin{proof}
Follows similarly to \cite[Lemma 1.6]{heitmannplusextended}.
\end{proof}

\begin{Prop}
\label{daggertightexcellent}
Let $R$ be a domain of characteristic $p > 0$ essentially of finite type over an excellent local ring. Then dagger closure as in Definition \ref{dagger} coincides with tight closure.
\end{Prop}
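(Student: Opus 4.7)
The plan is to prove both inclusions. For $I^* \subseteq I^\dagger$ I use the standard test element construction: since $R$ is essentially of finite type over an excellent local ring, it admits a test element $0 \neq c$. If $f \in I^*$ then $c f^{p^e} \in I^{[p^e]}$ for every $e \geq 0$, and since $R^+$ is perfect in characteristic $p$, extracting $p^e$-th roots yields $c^{1/p^e} f \in I R^+$. Fix any rank-at-most-one valuation $\nu$ on $R^+$; by Remark \ref{negvaldagger} we may assume $R^+ \subseteq R_\nu$, so $\nu(c) \geq 0$ and $\nu(c^{1/p^e}) = \nu(c)/p^e$ becomes smaller than any prescribed $\eps > 0$ for $e$ sufficiently large.

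For the inclusion $I^\dagger \subseteq I^*$ the strategy is to reduce to a complete local domain and invoke Hochster and Huneke's theorem \cite[Theorem 3.1]{hochsterhunekedagger}. Given $f \in I^\dagger_R$ and a maximal ideal $\mathfrak m \subseteq R$, the localization $R_\mathfrak m$ is an excellent local domain and $\hat R_\mathfrak m$ is reduced; by faithful flatness every minimal prime $P$ of $\hat R_\mathfrak m$ contracts to $(0)$ in $R_\mathfrak m$, so the composite $R \hookrightarrow R_\mathfrak m \hookrightarrow S := \hat R_\mathfrak m / P$ is an injection of domains. Lemma \ref{daggerextension} then yields $f \in (IS)^\dagger$. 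Because the valuation used in Definition \ref{daggerHH} is one particular rank-one valuation on $S^+$, the dagger closure of Definition \ref{dagger} is automatically contained in that of Definition \ref{daggerHH}, so the Hochster--Huneke theorem gives $f \in (IS)^*$.

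This conclusion holds for every minimal prime $P$ of $\hat R_\mathfrak m$, and since $\hat R_\mathfrak m$ is reduced, tight closure there is recovered from its minimal-prime quotients, giving $f \in (I \hat R_\mathfrak m)^*$. Faithful flatness of $R_\mathfrak m \to \hat R_\mathfrak m$ together with a test element already in $R_\mathfrak m$ descends this to $f \in I^*_{R_\mathfrak m}$, and since this holds for every maximal ideal we obtain $f \in I^*_R$ using that tight closure commutes with localization for rings essentially of finite type over excellent local ones. The main obstacle is exactly this descent: dagger closure transfers along injections of domains, but the completion is merely reduced, so we must handle each minimal prime separately before reassembling via the test-element and faithful-flatness machinery available under the excellence hypothesis.
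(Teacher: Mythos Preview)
Your overall strategy matches the paper's: the inclusion $I^* \subseteq I^\dagger$ via $p^e$-th roots of a test element is exactly the Hochster--Huneke argument the paper cites, and for $I^\dagger \subseteq I^*$ both you and the paper reduce to a complete local domain and invoke \cite[Theorem 3.1]{hochsterhunekedagger}. The one structural difference is that the paper first passes to the normalisation of $R$; since $R$ is then normal and excellent, each $\widehat{R_\mathfrak m}$ is already a domain, so Lemma~\ref{daggerextension} applies directly and no minimal-prime bookkeeping is needed. Your route through the minimal primes of the reduced completion is correct but longer; the normalisation trick is worth knowing.

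There is, however, a genuine error in your final sentence. You write that $f \in (IR_\mathfrak m)^*$ for every maximal $\mathfrak m$ implies $f \in I^*$ ``using that tight closure commutes with localization for rings essentially of finite type over excellent local ones.'' That statement is false: Brenner and Monsky showed that tight closure does \emph{not} commute with localization, already for hypersurfaces of finite type over a field. What you actually need is much weaker and does hold: the existence of a completely stable test element $c$ (which you already invoked for the descent from $\widehat{R_\mathfrak m}$) gives $c f^{q} \in I^{[q]} R_\mathfrak m$ for all $q$ and all $\mathfrak m$, hence $c f^{q} \in I^{[q]}$ since ideal membership is local, hence $f \in I^*$. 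This is precisely the content of \cite[Theorem 1.4.7\,(g)]{hochsterhuneketightzero}, which the paper cites to handle both the completion and the local-to-global step in one stroke. Replace the localization claim with this test-element argument and your proof is complete.
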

\begin{proof}
Dagger closure always contains tight closure (see the argument in the proof of Theorem \cite[Theorem 3.1]{hochsterhunekedagger} on p.\ 235) so we only have to show the other inclusion. Passing to the normalisation we may assume that $R$ is normal. 
By virtue of \cite[Theorem 1.4.11]{hochsterhuneketightzero} $R$ has a completely stable test element. Hence, by \cite[Theorem 1.4.7 (g)]{hochsterhuneketightzero} an element $f$ is contained in the tight closure of an ideal $I$ if and only if $f$ is contained in the tight closure of the extended ideal $I \widehat{R_\mathfrak{m}}$ for the completion of $R$ at every maximal ideal $\mathfrak{m}$. Furthermore, Lemma \ref{daggerextension} yields that $I^\dagger \subseteq (I\widehat{R_\mathfrak{m}})^\dagger$. Therefore, we may assume that $R$ is a complete local domain (since $R$ is normal and excellent, its completions at maximal ideals are again domains by \cite[7.8.3 (vii)]{EGAIV}).

Let now $I \subseteq R$ be an ideal, $f \in R$ and assume that $f \in I^\dagger$. In particular, this implies that $f$ is multiplied into $IR^+$ by elements of small order with respect to an extension of a valuation as in Lemma \ref{Lvaluation} above. But by \cite[Theorem 3.1]{hochsterhunekedagger} this yields that $f \in I^\ast$.
\end{proof}

The following result compares our definition of dagger closure with that of Hochster and Huneke.

\begin{Ko}
\label{KoNervensaege}
Let $(R,\mathfrak{m})$ be a complete local noetherian domain. Then dagger closure is contained in the dagger closure in the sense of Definition \ref{daggerHH}. If $R$ is of positive characteristic or if the ideal is primary to $\mathfrak{m}$ then the two closures coincide.
\end{Ko}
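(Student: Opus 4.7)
For the first inclusion I specialise: the valuation $\nu$ chosen in Definition~\ref{daggerHH}, extended to $R^+$ with values in $\mathbb{Q}$, is a rank-one valuation on $R^+$ and hence is an admissible test valuation for Definition~\ref{dagger}. Applying the condition of Definition~\ref{dagger} to this particular valuation immediately yields the condition of Definition~\ref{daggerHH}.

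For the reverse inclusion in positive characteristic my plan is to identify both closures with tight closure $I^\ast$. A complete local noetherian ring is excellent, so Proposition~\ref{daggertightexcellent} gives that the closure of Definition~\ref{dagger} equals $I^\ast$; meanwhile \cite[Theorem~3.1]{hochsterhunekedagger} equates the closure of Definition~\ref{daggerHH} with $I^\ast$ as well.

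For the $\mathfrak{m}$-primary case I fix an $f$ in the closure of Definition~\ref{daggerHH}, an arbitrary rank-one valuation $\mu$ on $R^+$ which (by Remark~\ref{negvaldagger}) is non-negative on $R^+$, and an integer $N$ with $\mathfrak{m}^N \subseteq I$. I then split on whether $\mu$ vanishes on some non-zero element of $\mathfrak{m}$. If $\mu(x) = 0$ for some non-zero $x \in \mathfrak{m}$, then $a := x^N \in I$ already does the job, since $\mu(a) = 0$ and $af \in IR^+$; this is where the $\mathfrak{m}$-primary hypothesis enters, via the choice of $N$. Otherwise $\mu$ is strictly positive on $\mathfrak{m} \setminus \{0\}$, so $\mu|_R$ is itself (up to rescaling) a valuation of the form permitted in Definition~\ref{daggerHH}. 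The plan is then to invoke the hypothesis that $f$ lies in the closure of Definition~\ref{daggerHH}, but using $\mu|_R$ in place of the originally chosen $\nu$, and thereby to obtain multipliers of arbitrarily small $\mu$-value.

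The main obstacle I anticipate is precisely this last step: I must justify that the closure of Definition~\ref{daggerHH} is independent of the choice of admissible valuation $\nu$ when $I$ is $\mathfrak{m}$-primary. This should rest on a comparison of rank-one valuations centred at $\mathfrak{m}$ (for instance by comparing them both to the $\mathfrak{m}$-adic filtration, exploiting that $\mathfrak{m}^N \subseteq I$) together with the verification that ``arbitrary smallness'' of multipliers is preserved under such a change of valuation.
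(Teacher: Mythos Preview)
Your plan is correct and mirrors the paper's proof: the first inclusion is by specialisation, the positive-characteristic case goes through tight closure via Proposition~\ref{daggertightexcellent} and \cite[Theorem~3.1]{hochsterhunekedagger}, and the $\mathfrak{m}$-primary case splits exactly as you describe. The ``main obstacle'' you identify---that any two rank-one valuations non-negative on $R$ and positive on $\mathfrak{m}$ are comparable---is precisely what the paper resolves by citing Izumi's theorem (\cite{izumimeasure}); this is the comparison result you anticipate needing, and it is a nontrivial theorem rather than something recoverable from the $\mathfrak{m}$-adic filtration by elementary means.
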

\begin{proof}
If an element is contained in dagger closure then a fortiori in the dagger closure of Hochster and Huneke. In positive characteristic the result is immediate from Proposition \ref{daggertightexcellent} and \cite[Theorem 3.1]{hochsterhunekedagger}.

Assume now that the ideal $I \subseteq R$ is primary. We have already seen in Remark \ref{negvaldagger} that we only need to look at valuations that are non-negative on $R$. If they are in addition positive on $\mathfrak{m}$ then they are equivalent by a theorem of Izumi (\cite{izumimeasure}). So the only remaining case is that $\nu(\mathfrak{m}) \geq 0$ and there is an $a \in \mathfrak{m}$ such that $\nu(a) = 0$. Since $I$ is primary $a^n \in I$ for $n \gg 0$ so with respect to this fixed valuation the dagger closure of $I$ is $R$.
\end{proof}

Of course, it is interesting to ask how Definition \ref{dagger} compares to the graded version of dagger closure (Definition \ref{Defgradedagger}). We do have a result in positive characteristic. First of all, we need the following

\begin{Le}
\label{gradedvaluationextension}
Let $R$ be an $\mathbb{N}$-graded domain finitely generated over a field $R_0$. Then the valuation by grading extends to a $\mathbb{Q}$-valued valuation on the completion $\widehat{R_{R_+}}$ of $R$ which is non-negative on $\widehat{R_{R_+}}$ and positive on $\widehat{R_{R_+}}R_+$. Moreover, $\widehat{R_{R_+}}$ is again a domain.
\end{Le}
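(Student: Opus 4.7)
The plan is to identify $\widehat{R_{R_+}}$ as a ring with the product $\prod_{d \geq 0} R_d$ of the graded pieces of $R$, and then to read off both the valuation extension and the domain property from this identification.

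First I would observe that $R_+$ is a maximal ideal (because $R_0$ is a field), so $R/R_+^n \to R_{R_+}/(R_+R_{R_+})^n$ is an isomorphism for every $n$, whence $\widehat{R_{R_+}}$ coincides with the $R_+$-adic completion of $R$. Picking homogeneous $R_0$-algebra generators $y_1, \ldots, y_k$ of $R$ of degrees $d_1, \ldots, d_k \geq 1$ and setting $D = \max d_i$, I would bracket the $R_+$-adic filtration between degree filtrations: since $R_+$ is concentrated in positive degree one has $R_+^n \subseteq \bigoplus_{d \geq n} R_d$, while any monomial $y_1^{a_1}\cdots y_k^{a_k}$ of degree $d$ is a product of $\sum a_i \geq d/D$ elements of $R_+$, giving $\bigoplus_{d \geq nD} R_d \subseteq R_+^n$. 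Hence the $R_+$-adic topology agrees with the one induced by the degree filtration $F_n := \bigoplus_{d \geq n} R_d$.

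Each piece $R_d$ is a finite-dimensional $R_0$-vector space, and a compatible sequence in $\varprojlim R/F_n$ decomposes degree-by-degree into a unique family $(f_d)_{d \geq 0}$ with $f_d \in R_d$, giving a ring isomorphism $\widehat{R_{R_+}} \cong \prod_{d \geq 0} R_d$ with multiplication $(fg)_e = \sum_{i+j=e} f_i g_j$ (a finite sum). On this product I would define $\nu((f_d)) = \min\{d : f_d \neq 0\}$, which is well-defined since the indexing set is bounded below by $0$. For nonzero $(f_d), (g_d)$ with minima $m, n$, the $(m+n)$-th component of the product is the single term $f_m g_n \in R_{m+n}$, which is nonzero because $R$ is a domain. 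This simultaneously establishes that $\widehat{R_{R_+}}$ is a domain and that $\nu$ is multiplicative; the ultrametric inequality is immediate componentwise. Non-negativity on $\widehat{R_{R_+}}$ is clear, and any nonzero element of the ideal $\widehat{R_{R_+}}R_+$ has vanishing degree-$0$ component, so $\nu \geq 1 > 0$ on $\widehat{R_{R_+}}R_+$.

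The main obstacle is the identification $\widehat{R_{R_+}} \cong \prod_{d \geq 0} R_d$: one needs the filtration squeeze to see that the $R_+$-adic and degree filtrations produce the same completion, and one needs finite-dimensionality of each $R_d$ (which follows from $R$ being a finitely generated $R_0$-algebra) for the graded decomposition to pass cleanly to the inverse limit. Once the identification is in hand, multiplicativity of $\nu$ and the domain property of $\widehat{R_{R_+}}$ both fall out from the observation that the leading components of two nonzero elements multiply without cancellation in $R$.
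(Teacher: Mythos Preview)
Your argument is correct and is precisely the explicit version of what the paper invokes by citation: the paper appeals to the \emph{Untergrad}-function from Scheja--Storch (and an ad-hoc argument in the second author's thesis), and the Untergrad of a power series is exactly the minimum degree of a nonzero component, i.e.\ your $\nu((f_d)) = \min\{d : f_d \neq 0\}$. One small point you leave implicit is that your $\nu$ actually restricts to the grading valuation on $R$ under the inclusion $R \hookrightarrow \prod_d R_d$; this is immediate since a finite sum of homogeneous pieces maps to itself, but it is worth a sentence since the lemma asserts an \emph{extension}.
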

\begin{proof}
This is provided by the ``Untergrad''-function in \cite[proof of Satz 62.12]{schejastorch2} which also implies the last assertion. Alternatively, there is an ad-hoc argument in \cite[proof of Lemma 9.10]{staeblerthesis}.
\end{proof}

\begin{Prop}
\label{gradeddaggerdagger}
Let $R$ be an $\mathbb{N}$-graded domain finitely generated over a field $R_0$ of positive characteristic. Then graded dagger closure coincides with dagger closure as in Definition \ref{dagger}.
\end{Prop}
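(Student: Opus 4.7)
The plan is to establish the equality by showing both inclusions pass through tight closure: by Proposition~\ref{daggertightexcellent}, $I^\dagger = I^*$ in this setting, so it suffices to show $I^{\dagger \text{GR}} \subseteq I^*$ and $I^* \subseteq I^{\dagger \text{GR}}$.

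For $I^* \subseteq I^{\dagger \text{GR}}$, I would exploit the existence of homogeneous tight closure certificates in an $\mathbb{N}$-graded domain finitely generated over a field of characteristic $p > 0$. Given $f \in I^*$, there exists a nonzero homogeneous element $c$, say of degree $d$, such that $c f^{p^e} \in I^{[p^e]}$ for all sufficiently large $e$. Taking $p^e$-th roots inside $R^{1/p^e}$ yields $c^{1/p^e} f \in I R^{1/p^e} \subseteq I R^{+ \text{GR}}$, and since the grading of $R$ extends naturally to $R^{1/p^e}$, the element $c^{1/p^e}$ is homogeneous of degree $d/p^e$, hence $\nu(c^{1/p^e}) = d/p^e$. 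Choosing $e$ sufficiently large makes this arbitrarily small, giving the required witnesses for membership in $I^{\dagger \text{GR}}$.

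For $I^{\dagger \text{GR}} \subseteq I^\dagger$, I would pass to the completion $\widehat{R}$ of $R$ at the irrelevant maximal ideal $R_+$. By Lemma~\ref{gradedvaluationextension}, the grading valuation extends to a $\mathbb{Q}$-valued valuation $\tilde{\nu}$ on $\widehat{R}$ which is non-negative on $\widehat{R}$ and positive on $R_+ \widehat{R}$. Extend $\tilde{\nu}$ to a rank one valuation on $\widehat{R}^+$, choosing simultaneously an embedding $R^{+ \text{GR}} \hookrightarrow \widehat{R}^+$ such that the restriction of $\tilde{\nu}$ to $R^{+ \text{GR}}$ agrees with the grading valuation $\nu$. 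Then any graded dagger witness $a \in R^{+ \text{GR}}$ with $\nu(a) < \eps$ and $a f \in I R^{+ \text{GR}}$ gives the same element $a$ in $\widehat{R}^+$ with $\tilde{\nu}(a) < \eps$ and $a f \in I \widehat{R}^+$. Hence $f$ satisfies the Hochster--Huneke condition (Definition~\ref{daggerHH}) for $I \widehat{R}$ with respect to $\tilde{\nu}$. By Corollary~\ref{KoNervensaege}, which identifies the two notions of dagger closure in positive characteristic, $f$ lies in the tight closure of $I \widehat{R}$, and the compatibility of tight closure with completion for excellent rings gives $f \in (I \widehat{R})^* \cap R = I^* = I^\dagger$.

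The main obstacle is the valuation-theoretic compatibility in the second direction: one must produce a joint extension of the grading valuation on $R^{+ \text{GR}}$ and of $\tilde{\nu}$ on $\widehat{R}$ to a single valuation on $\widehat{R}^+$. This reduces to extending a rank one valuation through an algebraic extension of fraction fields, with some care about the compositum $R^{+ \text{GR}} \cdot \widehat{R}$, but standard results on the extension of valuations along with the fact that rank is preserved under algebraic extensions should suffice. The first direction is comparatively routine, provided one has a homogeneous element $c$ certifying tight closure membership, which can be arranged from a homogeneous graded test element or by passing to a homogeneous component.
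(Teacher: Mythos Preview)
Your proposal is correct and follows essentially the same route as the paper: both arguments show $I^{\dagger\text{GR}} = I^\ast = I^\dagger$, obtaining $I^\ast \subseteq I^{\dagger\text{GR}}$ via the standard $c^{1/p^e}$ witnesses and $I^{\dagger\text{GR}} \subseteq I^\ast$ by passing to the completion $S = \widehat{R_{R_+}}$, invoking \cite[Theorem 3.1]{hochsterhunekedagger} to get $f \in (IS)^\ast$, and descending via completely stable test elements. The only difference is cosmetic: the paper cites Hochster--Huneke's theorem directly where you route through Corollary~\ref{KoNervensaege}, and you spell out the valuation-compatibility issue (extending $\nu$ simultaneously to $R^{+\text{GR}}$ and $\widehat{R}$) that the paper leaves implicit.
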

\begin{proof}
We prove that graded dagger closure coincides with tight closure in this setting. Then the result follows by Proposition \ref{daggertightexcellent}.

As before one inclusion is clear. Denote the completion of $R$ at $R_+$ by $S$ and note that it is an integral domain by the previous lemma. Now, if an element $f \in R$ is contained in $I^{\dagger \text{GR}}$ for some ideal $I \subseteq R$ then $f$ is multiplied into $IS$ by elements of arbitrarily small order with respect to the valuation $\nu$ in Lemma \ref{gradedvaluationextension}. Therefore, $f \in (IS)^\ast$ by \cite[Theorem 3.1]{hochsterhunekedagger}. Since $R$ has a completely stable test element (fields are excellent local rings -- thus \cite[Theorem 1.4.11]{hochsterhuneketightzero} applies) this implies $f \in I^\ast$.
\end{proof}

\begin{Ko}
\label{gradeddaggertight}
Let $R$ be an $\mathbb{N}$-graded ring finitely generated over a field $R_0$ of positive characteristic. Then graded dagger closure coincides with tight closure.
\end{Ko}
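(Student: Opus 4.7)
The plan is to reduce to Proposition \ref{gradeddaggerdagger} via minimal primes. By Definition \ref{Defgradedagger}, $f \in I^{\dagger \text{GR}}$ iff $f + P \in (IR/P)^{\dagger \text{GR}}$ for every minimal prime $P$ of $R$. Since minimal primes of a graded ring are homogeneous and $P \cap R_0 = 0$ (because $R_0$ is a field), each quotient $R/P$ is an $\mathbb{N}$-graded domain finitely generated over $R_0$ of positive characteristic, so Proposition \ref{gradeddaggerdagger} identifies $(IR/P)^{\dagger \text{GR}}$ with the tight closure $(I(R/P))^*$. The task thus reduces to showing $f \in I^*$ iff $f + P \in (I(R/P))^*$ for every minimal prime $P$ of $R$.

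The forward direction is standard: a test equation $cf^q \in I^{[q]}$ with $c \in R^\circ$ descends to $R/P$ with $\bar c \neq 0$ in the domain $R/P$. For the converse I would first treat the case where $R$ is reduced, using the finite embedding $R \hookrightarrow S := \prod_i R/P_i$ together with a conductor element $c \in R^\circ$ satisfying $cS \subseteq R$: tight closure in a product of domains decomposes componentwise, so the hypothesis gives $f \in (IS)^*$, and multiplying a resulting test equation by $c$ returns it to $I^{[q]} \subseteq R$ while keeping the coefficient in $R^\circ$. To extend to general $R$, I note that the minimal primes of $R$ and $R/N$ ($N$ the nilradical) correspond, so the hypothesis descends to $R/N$, and the reduced case yields $cf^q \in I^{[q]} + N$ for some $c \in R^\circ$ and all $q \gg 0$. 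Raising to a Frobenius power $p^k \geq m$ (with $N^m = 0$) and using $(a+n)^{p^k} = a^{p^k} + n^{p^k}$ in characteristic $p$ gives $c^{p^k} f^{qp^k} \in I^{[qp^k]}$, hence $f \in I^*$.

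The main obstacle is this converse step, which packages the standard reduction of tight closure to minimal primes in reduced Noetherian rings with test elements. The required machinery is available here because $R$ is essentially of finite type over the field $R_0$, which provides completely stable test elements via \cite[Theorem 1.4.11]{hochsterhuneketightzero}.
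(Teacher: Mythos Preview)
Your proposal is correct and follows essentially the same approach as the paper: both reduce to the domain case by killing minimal primes and then invoke (the proof of) Proposition \ref{gradeddaggerdagger}. The paper's proof is a single sentence asserting that ``both definitions reduce to the domain case by killing minimal primes,'' treating the tight closure side as a known fact, whereas you spell out this reduction via the conductor argument for the reduced case and a Frobenius power argument to kill the nilradical. Your final remark about completely stable test elements is unnecessary for the argument you actually gave---the conductor method is more elementary and does not require test elements---but it is not incorrect.
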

\begin{proof}
Since both definitions reduce to the domain case by killing minimal primes the result follows from the proof of Proposition \ref{gradeddaggerdagger}.
\end{proof}

\begin{Prop}
\label{daggerintegral}
Let $R$ be a noetherian domain. Then dagger closure coincides with integral closure for principal ideals and dagger closure is always contained in integral closure.
\end{Prop}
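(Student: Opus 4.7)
The plan is to establish the two assertions separately. For $I^\dagger \subseteq \overline{I}$, I would argue by contraposition via the valuative criterion for integral closure (\cite[Theorem 6.8.3]{swansonhunekeintegralclosure}). If $f \notin \overline{I}$, there exists a DVR $V$ with $R \subseteq V \subseteq Q(R)$ and associated valuation $\nu_V$ satisfying $\nu_V(f) < \nu_V(I) := \min_{g \in I} \nu_V(g)$. Extend $\nu_V$ to a rank-one valuation $\nu$ on $Q(R^+) = \overline{Q(R)}$ (possible because an extension always exists and its value group lies in the divisible hull of $\mathbb{Z}$, which embeds into $\mathbb{R}$). Since $R_\nu$ is integrally closed in $Q(R^+)$ and contains $V$, we obtain $R^+ \subseteq R_\nu$; in particular $\nu$ is nonnegative on $R^+$ and every element of $IR^+$ has $\nu$-value at least $\nu_V(I)$. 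If $f$ were in $I^\dagger$, then for each $\eps > 0$ one could find $a \in R^+$ with $\nu(a) < \eps$ and $af \in IR^+$; the latter would force $\nu(a) \geq \nu_V(I) - \nu_V(f) > 0$ independently of $\eps$, and picking $\eps$ below this fixed bound yields the desired contradiction.

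For the reverse inclusion in the principal case $I = (g)$, the case $g = 0$ is trivial. Otherwise, given $f \in \overline{(g)}$ one has an equation $f^n + c_1 g f^{n-1} + \cdots + c_n g^n = 0$ with $c_i \in R$; dividing by $g^n$ inside $Q(R)$ shows that $f/g$ satisfies a monic polynomial with coefficients in $R$, hence $f/g \in R^+$. Then $f = (f/g) \cdot g \in (g)R^+$, and taking $a = 1$ satisfies $\nu(1) = 0 < \eps$ for every valuation $\nu$ of rank at most one and every $\eps > 0$, verifying the condition of Definition \ref{dagger}.

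The only technical point is the extension of $\nu_V$ to $R^+$ in the first part. This is standard valuation theory: any valuation on a field extends to any algebraic extension, with value group inside the divisible hull of the original, so the rank-at-most-one property is preserved. Once this is in hand, both directions are short manipulations of the definition of dagger closure.
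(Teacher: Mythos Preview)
Your argument is correct and follows essentially the same route as the paper. For the principal case you show $f/g\in R^+$ directly (the paper instead passes to the normalisation $S$ and uses $\overline{IS}=IS$, which amounts to the same thing), and for the containment $I^\dagger\subseteq\bar I$ your valuative-criterion argument is exactly the content behind the paper's citation of Heitmann's Proposition~2.6(a).
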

\begin{proof}
Let $I = (x)$ be a principal ideal in $R$ and let $S$ be the normalisation of $R$. If $f \in \bar{I}$ then $f \in \overline{IS} = IS$ (this follows from the first part of the proof of \cite[Proposition 10.2.3]{brunsherzog} which does not require the noetherian hypothesis) and hence immediately $f \in I^\dagger$.

For the other inclusion and the second assertion one can argue as in \cite[Proposition 2.6 (a)]{heitmannplusextended}).
\end{proof}

\section{Dagger closure in regular domains}
We now want to prove our main result, namely that for every ideal $I$ in a regular noetherian domain containing a field we have $I^\dagger = I$. The key ingredient will be the existence of big Cohen-Macaulay algebras.

\begin{Def}
Let $R$ be a noetherian local ring. A \emph{big balanced Cohen-Macaulay algebra} is an $R$-algebra $A$ such that the images of every system of parameters in $R$ form a regular system in $A$.
\end{Def}

The existence of such algebras has been proven by Hochster and Huneke if $R$ is an excellent local domain containing a field (see \cite{hochsterhunekeinfinitebig}). In fact, they also proved that if, in addition, $R$ is of positive characteristic then $R^+$ is a big balanced Cohen-Macaulay algebra. Moreover, Hochster and Huneke proved in \cite{hochsterhunekeapplicationsBCMA} that this assignment can be made weakly functorial in a certain sense.

\begin{Le}
\label{ValuationExtensionMapping}
Let $(R,\mathfrak{m})$ be a complete local noetherian domain containing a field and let $\nu$ be a rank one valuation on $R^+$ which is non-negative on $R$. Fix elements $r, x_1, \ldots, x_n$ in $R$. Then there is a big balanced Cohen-Macaulay algebra $A$ for $R$ such that if $r \in (x_1, \ldots, x_n)A$ then $\nu(r) \geq \min_i \nu(x_i)$.
\end{Le}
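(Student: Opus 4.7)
The plan is to construct the big balanced Cohen-Macaulay $R$-algebra $A$ together with an $R$-algebra homomorphism $\phi : A \to W$ into a valuation domain $W$ whose valuation $\mu$ extends $\nu|_R$. Granted such a pair $(A,\phi)$, an equation $r = \sum_i a_i x_i$ in $A$ pushes forward via $\phi$ to $r = \sum_i \phi(a_i)\, x_i$ in $W$; since $\phi(a_i) \in W$ forces $\mu(\phi(a_i)) \geq 0$, the ultrametric inequality yields $\nu(r) = \mu(r) \geq \min_i \mu(x_i) = \min_i \nu(x_i)$, as required. Note that the implication is vacuous when $\nu(r) \geq \min_i \nu(x_i)$ already holds; in that case any big balanced Cohen-Macaulay $R$-algebra, which exists by \cite{hochsterhunekeinfinitebig}, suffices, so the real content of the lemma lies in the opposite case.

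A preliminary point I would establish is that $\nu$, although assumed non-negative only on $R$, is automatically non-negative on all of $R^+$: if $x \in R^+$ satisfies a monic equation $x^m + a_1 x^{m-1} + \cdots + a_m = 0$ with $a_i \in R$, then $\nu(x) < 0$ would make $\nu(x^m) = m\nu(x)$ strictly less than every other $\nu(a_i x^{m-i})$, contradicting the vanishing of the sum. Hence $R^+$ sits inside the rank-one valuation ring $V \subseteq \overline{Q(R)}$ of $\nu$. In positive characteristic this observation settles the lemma immediately: by \cite{hochsterhunekeinfinitebig} the ring $R^+$ is itself a big balanced Cohen-Macaulay $R$-algebra, and the inclusion $R^+ \hookrightarrow V$ is the desired $\phi$, so one may take $A = R^+$.

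In equal characteristic zero the algebra $R^+$ is not known to be big Cohen-Macaulay, and one must instead run the Hochster-Huneke construction via iterated algebra modifications of the form $B \rightsquigarrow B[T_1, \ldots, T_d]/(u - \sum x_i T_i)$, together with the weak functoriality for big Cohen-Macaulay algebras from \cite{hochsterhunekeapplicationsBCMA}. The idea is to start the construction from a seed $R$-subalgebra of $V$ and to ensure at each modification step that the newly adjoined variables can be assigned values in some valuation overring of $V$, so that the entire tower of modifications lives in a coherent chain of valuation domains extending $V$. The direct limit of this procedure then provides a big balanced Cohen-Macaulay $R$-algebra $A$ equipped with a map $\phi$ to such an overring $W$. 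The principal obstacle is precisely this valuation-compatible execution of the characteristic-zero construction: arranging each Hochster-Huneke modification to remain embedded in a valuation domain extending $V$ while still doing its job of killing bad relations among systems of parameters. This is the delicate technical heart of the proof, where the weak-functoriality machinery of \cite{hochsterhunekeapplicationsBCMA} combines with the standard extension theorem for valuations to ring extensions to produce the desired $A$ and $\phi$.
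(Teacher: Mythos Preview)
Your positive-characteristic argument via $A = R^+ \hookrightarrow V$ is fine, but the equal-characteristic-zero part is only a sketch: you identify the ``delicate technical heart'' --- running the Hochster--Huneke modification process so that every stage still maps to a valuation overring of $V$ --- and then stop. This is a genuine gap. It is not clear that such a valuation-compatible tower can be built: each modification adjoins a relation $u = \sum_i x_i T_i$, and extending the map to $W$ requires solving this in $W$ with the $T_i$ landing in the valuation ring, i.e.\ requires $\mu(u) \geq \min_i \mu(x_i)$; there is no a priori reason the bad-relation element $u$ satisfies this. So the proposed route may not merely be unfinished but actually obstructed.

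The paper bypasses all of this with a much shorter and more conceptual argument which shows, in fact, that \emph{every} big balanced Cohen-Macaulay algebra $A$ has the stated property. The key observation you are missing is that $r \in (x_1,\ldots,x_n)A$ for a big Cohen-Macaulay $R$-algebra $A$ already forces $r$ into the big equational tight closure of $(x_1,\ldots,x_n)$ in $R$ (this is \cite[Theorem 5.6(a)]{hochstertightsolid} or \cite[Theorem 5.12]{hochsterhunekeapplicationsBCMA}), hence into the integral closure $\overline{(x_1,\ldots,x_n)}$. The valuative characterisation of integral closure then gives $\nu(r) \geq \min_i \nu(x_i)$ immediately for any valuation non-negative on $R$. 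No special construction of $A$ is needed, and the argument is uniform in the characteristic.
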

\begin{proof}
Note that $R$ is excellent.
By \cite[Theorem 5.6 (a)]{hochstertightsolid} (or \cite[Theorem 5.12]{hochsterhunekeapplicationsBCMA}), $r$ is then contained in the big equational tight closure of $(x_1, \ldots, x_n)$. Hence, it is a fortiori contained in the integral closure of $(x_1, \ldots, x_n)$. Now the result follows by the characterisation of integral closure in terms of valuations (cf.\ \cite[Proposition 10.2.4 (a)]{brunsherzog}).
\end{proof}

\begin{Le}
\label{ValuationExtensionFiniteNonnegative}
Let $(R, \mathfrak{m}) \subseteq (S, \mathfrak{n})$ be a finite extension of local domains. Let $\nu$ be a valuation on $S$ which is non-negative on $R$. Then $\nu$ is non-negative on $S$.
\end{Le}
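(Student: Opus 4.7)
The plan is to exploit the classical fact that a valuation ring is integrally closed in its own field of fractions. Write $R_\nu \subseteq Q(S)$ for the valuation ring associated to $\nu$; by definition, the hypothesis that $\nu$ is non-negative on $R$ translates into the inclusion $R \subseteq R_\nu$. The extension $R \subseteq S$ being finite is in particular integral, so every $s \in S$ satisfies a monic polynomial relation with coefficients in $R$, and hence with coefficients in $R_\nu$. Since $R_\nu$ is integrally closed in $Q(R_\nu) = Q(S)$, this forces $s \in R_\nu$, i.\,e.\ $\nu(s) \geq 0$, which is the desired conclusion.

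The only point requiring a brief check is that the ambient field is the right one: $\nu$ is defined on $S$, so extends uniquely to its fraction field, and by definition the valuation ring of a valuation on a field $K$ has $K$ as its fraction field. Thus $R_\nu$ is integrally closed inside $Q(S)$, which is where the integral relation witnessing $s \in S$ over $R$ lives. No further input (the local hypotheses on $R$ and $S$, the existence of common residue field, or the specific structure of $\mathfrak{m}$ and $\mathfrak{n}$) is used. There is no real obstacle; the lemma is essentially a restatement of the standard fact that any valuation ring containing a subring $R$ also contains the integral closure of $R$ in the ambient field.
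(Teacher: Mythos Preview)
Your proof is correct and follows essentially the same idea as the paper's. The paper writes down the monic integral equation $s^d = \sum_i r_i s^i$ and applies $\nu$ directly to obtain $d\,\nu(s) \geq \min_i\{\nu(r_i) + i\,\nu(s)\} \geq \min_i i\,\nu(s)$, forcing $\nu(s) \geq 0$; you package this same computation as the standard fact that valuation rings are integrally closed in their fraction field.
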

\begin{proof}
Let $s \in S$, we find an equation $s^d = \sum_i r_i s^i$. Applying $\nu$ yields $d \nu(s) \geq \min_i \{\nu(r_i) + i \nu(s)\} \geq \min_i i \nu(s)$. Hence, $\nu(s) \geq 0$.
\end{proof}

\begin{Def}
Let $R$ be a domain and $M$ an $R$-module. We say that $m \in M$ is \emph{almost zero} if for every valuation $\nu$ on $R^+$ of rank at most one and for every $\eps > 0$ the element $m \otimes 1 \in M \otimes R^+$ is annihilated by an element $a \in R^{+}$ with $\nu(a) < \eps$.
\end{Def}

\begin{Theo}
\label{daggerregularclose}
Let $R$ be a regular noetherian domain containing a field $k$ and $I \subseteq R$ an ideal. Then $I^\dagger = I$. 
\end{Theo}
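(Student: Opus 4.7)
The plan is to reduce the theorem to a complete regular local ring containing a field, where we can deploy big balanced Cohen-Macaulay algebras together with Lemma~\ref{ValuationExtensionMapping}. For the reduction, given $f \in I^\dagger$ in the regular noetherian domain $R$, Lemma~\ref{daggerextension} yields $f \in (I\widehat{R_\mathfrak{m}})^\dagger$ for every maximal ideal $\mathfrak{m}$ of $R$; since regular local rings complete to regular local domains, $\widehat{R_\mathfrak{m}}$ falls under the case we want to handle. Once we know $f \in I\widehat{R_\mathfrak{m}}$ for each $\mathfrak{m}$, faithful flatness of completion and the identity $I = \bigcap_\mathfrak{m} IR_\mathfrak{m}$ give $f \in I$.

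So assume $(R,\mathfrak{m})$ is a complete regular local ring containing a field, $I = (x_1, \ldots, x_n)$, and $f \in I^\dagger$. I fix a $\mathbb{Q}$-valued valuation $\nu$ on $R$ as provided by Lemma~\ref{Lvaluation}, non-negative on $R$ and positive on $\mathfrak{m}$, and extend it to $R^+$; by Lemma~\ref{ValuationExtensionFiniteNonnegative} applied to each finite subextension, this extension remains non-negative on all of $R^+$. The dagger hypothesis applied to this specific $\nu$ gives, for every $\eps > 0$, some $a_\eps \in R^+$ with $\nu(a_\eps) < \eps$ and a relation $a_\eps f = \sum_i b_{i,\eps} x_i$ that lives in some finite (normal) subextension $S_\eps \subseteq R^+$ of $R$.

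For the core step I would invoke the existence of big balanced Cohen-Macaulay algebras in equal characteristic (Hochster-Huneke) together with their weak functoriality: this produces a big CM $R$-algebra $A$ through which the maps $R \to S_\eps$ can be routed, so that the relations become $a_\eps f \in IA$ inside $A$. Because $R$ is regular, every system of parameters of $R$ being regular on $A$ forces $A$ to be flat over $R$, hence faithfully flat (as $R$ is local), and in particular $IA \cap R = I$. The problem thereby reduces to promoting the relations $a_\eps f \in IA$ into the single statement $f \in IA$. I would attack this via Krull's intersection theorem: it suffices to show $f \in I + \mathfrak{m}^N$ for each $N$. For a given $N$, apply Lemma~\ref{ValuationExtensionMapping} to the elements $f$ and the finitely many generators of $I + \mathfrak{m}^N$ (the $x_i$ together with the degree-$N$ monomials in a regular system of parameters). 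This delivers a BCM $R$-algebra in which membership in the enlarged ideal translates into a valuation inequality, and combining the bound with the Izumi-type estimate $\nu(g) \geq Nc$ for $g \in \mathfrak{m}^N$ (for some $c > 0$ depending only on $\nu$) should pin $f$ inside $I + \mathfrak{m}^N$ once $\eps$ is chosen small enough with respect to $N$.

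The main obstacle is bridging between the witness-style dagger hypothesis (which lives in $R^+$) and a membership statement for $f$ in $R$ (which is what we really want). A naive application of Lemma~\ref{ValuationExtensionMapping} only yields $\nu(f) \geq \min_i \nu(x_i)$, i.e.\ membership in the integral closure, which is insufficient for ideals that are not integrally closed. Overcoming this requires using the flexibility of Lemma~\ref{ValuationExtensionMapping} with enlarged ideals $I + \mathfrak{m}^N$ at each scale $N$, plus a mechanism for converting the $R^+$-witnesses $a_\eps$ into testable data in $R$ or in the BCM algebra $A$ — either by passing through the weakly functorial map $R^+ \to A$ or by taking a suitable norm $N_{S_\eps/R}(a_\eps) \in R$ and balancing its (possibly large) valuation contributions from conjugate valuations against the power of $f$ that arises. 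Making this balancing argument yield the desired conclusion for every $N$ is the technical heart of the proof.
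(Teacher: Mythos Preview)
Your reduction to the complete regular local case and your invocation of big balanced Cohen--Macaulay algebras and their flatness over $R$ are exactly right, and match the paper. The gap is precisely the one you flag in your last paragraph: you never actually extract from the relation $a_\eps f\in IA$ any information about $f$ itself. Applying Lemma~\ref{ValuationExtensionMapping} to $f$ and generators of $I+\mathfrak{m}^N$ presupposes $f\in (I+\mathfrak{m}^N)A$, which is what you are trying to prove; applying it to $a_\eps$ requires knowing that $a_\eps$ lies in some specific ideal of $A$, which you do not know. The norm/Izumi balancing you sketch does not close this, because the conjugate valuations of $a_\eps$ are uncontrolled, so $\nu(N_{S_\eps/R}(a_\eps))$ can be arbitrarily large relative to $\eps$, and there is no clean relation forcing a power of $f$ into $I$ in $R$.

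The paper bypasses this entirely with a socle argument. After reducing to $I$ being $\mathfrak{m}$-primary (so $R/I$ has finite length), one does \emph{not} work with an arbitrary $f\in I^\dagger\setminus I$; instead one chooses $f$ so that its annihilator in $R/I$ is exactly $\mathfrak{m}$. Flatness of $A$ over $R$ then transports this: the annihilator of $f$ in $A/IA$ is $\mathfrak{m}A$. Hence any $a_\eps$ with $a_\eps f\in IA$ automatically satisfies $a_\eps\in\mathfrak{m}A$, and \emph{now} Lemma~\ref{ValuationExtensionMapping} applies to $a_\eps$ and the generators of $\mathfrak{m}$, giving $\nu(a_\eps)\geq\min_i\nu(x_i)>0$, a contradiction for small $\eps$. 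This single move replaces all of your Krull-intersection and balancing machinery.
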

\begin{proof}
Localising at a maximal ideal $I \subseteq \mathfrak{m}$ and completing we may assume that $(R, \mathfrak{m})$ is a noetherian complete regular local domain by Lemma \ref{daggerextension} and since the completion of a regular noetherian local ring is again a regular noetherian local domain.

Since $I \subseteq J$ implies $I^\dagger \subseteq J^\dagger$ and since $I$ is the intersection of $\mathfrak{m}$-primary ideals (see e.\,g.\ \cite[Ex.\ 1.1]{hunekeapplication}) we may assume that $I$ is $\mathfrak{m}$-primary. 

Assume now that $I \neq I^{\dagger}$. Since $R/I$ has finite length we find by \cite[Theorem 3.1]{Eisenbud} an element $f \in I^\dagger/I \subset R/I$ whose annihilator is $\mathfrak{m}$. Similar to \cite[Proposition 2.8]{brennerstaeblerdaggersolid} an element of $R$ is in $I^{\dagger}$ if and only if its image is almost zero in $R/I$ (considered as an $R$-module).
Hence, $f$ is almost zero. Therefore, for every $\eps > 0$ there is an element $a \in R^+$ such that $f \otimes a = 0$ and $\nu(a) < \eps$. In particular, $af =0$ in $R^+/IR^+$ and this is the case if and only if $a f = \sum_i a_i f_i$ for suitable $a_i \in R^+$ and generators $f_1, \ldots, f_n$ of $I$. We may thus assume that all relevant data are contained in a finite ring extension $R \subseteq S$. Note that $S$ is still complete local by \cite[Corollary 7.6]{Eisenbud} since $S$ is a domain. 

By \cite[Theorem 3.9]{hochsterhunekeapplicationsBCMA} there is a weakly functorial big balanced Cohen-Macaulay algebra $A$ for $S$. We have an exact sequence $0 \to \mathfrak{m} \to R \to R \cdot f \to 0$ of $R$-modules. Since $A$ is flat over $R$ (by \cite[6.7]{hochsterhunekeinfinitebig} -- this is precisely where we need $R$ regular) tensoring with $A$ yields that the annihilator of $f$ in $A/IA$ considered as an $A$-module is $\mathfrak{m}A$. In particular, $a \in \mathfrak{m}A$.

In order to deduce a contradiction fix a rank one valuation $\nu$ on $R$ which is non-negative and positive on $\mathfrak{m}$ and extend it to $R^+$ (this is always possible and then necessarily a rank one valuation on $R^+$ -- see \cite[vol.\,2, VI, \S 4, Theorem 5']{zariskisamuel} and \cite[vol.\,2, VI, \S 11, Lemma 2]{zariskisamuel}). We consider the restriction of $\nu$ to $S$. By Lemma \ref{ValuationExtensionFiniteNonnegative} this valuation is non-negative on $S$. Hence, we can apply Lemma \ref{ValuationExtensionMapping} to see that $\nu(a) \geq \min_i \nu(x_i)$, where the $x_i$ are ideal generators of $\mathfrak{m}$. Since $\nu$ is actually positive on $\mathfrak{m}$ this is a contradiction for $\eps > 0$ small enough.
\end{proof}

\begin{Bem}
Since the assertion of the Theorem can be reduced to the complete local case the result follows in characteristic $p >0$ from \cite[Theorem 3.1]{hochsterhunekedagger} and from the fact that ideals in regular rings are tightly closed.
Furthermore, in positive characteristic, $R^+$ is a big balanced Cohen-Macaulay algebra for $R$  if $R$ is an excellent local domain (see \cite[Theorem 1.1]{hochsterhunekeinfinitebig} or \cite[Theorem 7.1]{hunekeapplication}). But this is wrong in characteristic zero if $\dim R \geq 3$.
\end{Bem}

\begin{Ko}
\label{regulargradeddagger}
Let $R = k[x_1, \ldots, x_d]$ be an $\mathbb{N}$-graded polynomial ring over a field $k = R_0$ and $I \subseteq R$ an ideal. Then $I^{\dagger \text{GR}} = I$.
\end{Ko}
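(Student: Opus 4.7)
The plan is to reduce the corollary to Theorem \ref{daggerregularclose}, splitting by characteristic.

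In positive characteristic the corollary is immediate: Proposition \ref{gradeddaggerdagger} identifies graded dagger closure with ordinary dagger closure for this graded $k$-algebra, and Theorem \ref{daggerregularclose} then yields $I^{\dagger\text{GR}} = I^\dagger = I$ because $R$ is regular.

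In characteristic zero, where Proposition \ref{gradeddaggerdagger} is unavailable, I would instead adapt the proof of Theorem \ref{daggerregularclose} directly. The key observation is that once a socle element modulo $I$ has been chosen and a big Cohen-Macaulay algebra fixed, the remainder of that proof uses only one rank one valuation, required to be non-negative on the ring and positive on the maximal ideal; the graded dagger closure condition already furnishes witnesses $a$ of arbitrarily small valuation with respect to the grading valuation. Concretely, I would localize and complete $R$ at $R_+$ to obtain $S = k[[x_1, \ldots, x_d]]$, a complete regular local domain. By Lemma \ref{gradedvaluationextension} the grading valuation $\nu$ extends to a rank one $\mathbb{Q}$-valued valuation on $S$ that is non-negative on $S$ and positive on $S_+$. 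Extending $\nu$ further to $S^+$ and choosing an embedding $R^{+\text{GR}} \hookrightarrow S^+$ over $R \hookrightarrow S$ so that $\nu$ restricts to the grading valuation on $R^{+\text{GR}}$ (arranged by first extending along the compositum $R^{+\text{GR}} \cdot S$), the defining condition $f \in I^{\dagger\text{GR}}$ becomes the single-valuation statement that for every $\varepsilon > 0$ there is $a \in S^+$ with $\nu(a) < \varepsilon$ and $af \in IS^+$. I would then run the proof of Theorem \ref{daggerregularclose} verbatim for $S$, $IS$, and $\nu$: reduce to the $S_+$-primary case by Krull's intersection theorem, pick a socle element, pass the witness equation to a finite extension of $S$, invoke a weakly functorial big balanced Cohen-Macaulay algebra $A$ via \cite[Theorem 3.9]{hochsterhunekeapplicationsBCMA}, use flatness of $A$ over the regular ring $S$ to force $a \in S_+ \cdot A$, and apply Lemmas \ref{ValuationExtensionMapping} and \ref{ValuationExtensionFiniteNonnegative} to contradict $\nu(a) < \varepsilon$. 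Faithful flatness of $S$ over $R$ finally yields $f \in I$.

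The main obstacle is the valuation bookkeeping: one must arrange a single rank one valuation on $S^+$ that simultaneously restricts to the grading valuation on $R^{+\text{GR}}$ (so the witnesses $a$ retain their small $\nu$-value) and is positive on $S_+$ (so the terminal valuation bound forces the contradiction). Positivity on $S_+$ is handled by Lemma \ref{gradedvaluationextension}, and the simultaneous restriction is secured by the compositum construction above together with standard valuation-extension theory. Once these compatibilities are in place, the remaining steps transcribe directly from the proof of Theorem \ref{daggerregularclose}.
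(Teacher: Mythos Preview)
Your approach is essentially the paper's: rerun the proof of Theorem~\ref{daggerregularclose} using only the single valuation furnished by Lemma~\ref{gradedvaluationextension}. The paper does not split by characteristic, but your positive-characteristic shortcut via Proposition~\ref{gradeddaggerdagger} is a harmless addition, and your identification of the valuation bookkeeping as the one genuinely new point is accurate. Your compositum description is a bit terse; the clean way to see the compatibility is that for each finite graded $R\subseteq R'\subseteq R^{+\text{GR}}$ the ring $R'\otimes_R S$ is the $R'_+$-adic completion of $R'$, hence a domain carrying the Untergrad valuation of Lemma~\ref{gradedvaluationextension}, which simultaneously extends the grading valuation on $R'$ and $\nu$ on $S$.

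There is, however, a slip in your final step. The completion $S=k[[x_1,\ldots,x_d]]$ is \emph{not} faithfully flat over $R=k[x_1,\ldots,x_d]$ (for instance $(x_1-1,x_2,\ldots,x_d)S=S$); it is faithfully flat only over the localisation $R_{R_+}$. Thus from $f\in IS$ you obtain only $f\in IR_{R_+}$, and the contraction $IR_{R_+}\cap R=I$ requires $I$ to be homogeneous (or at least $I\subseteq R_+$). In fact the corollary as literally stated already fails for arbitrary ideals: with $I=(x_1-1)$ the multiplier $a=x_1-1$ has grading valuation $0<\varepsilon$ and lies in $IR^{+\text{GR}}$, so $1\in I^{\dagger\text{GR}}\setminus I$. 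The defect therefore sits in the statement rather than in your method; once one restricts to homogeneous ideals, both your argument and the paper's sketch go through.
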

\begin{proof}
One can employ essentially the same arguments as in the proof of Theorem \ref{daggerregularclose}. The reduction argument has only to be carried out with respect to a valuation as in Lemma \ref{gradedvaluationextension}.
\end{proof}

Note that since $R$ is $\mathbb{N}$-graded any regular ring with $R_0$ a field is in fact a polynomial ring -- see \cite[Exercise 2.2.25 (c)]{brunsherzog}.

\section{Consequences of the main theorem}
In this section we draw various consequences from our main theorem \ref{daggerregularclose}, discuss the one dimensional situation and compare dagger closure to Heitmann's full rank one closure.

\begin{Ko}
Let $R$ be the ring of invariants of a finite group $G$ acting linearly on a polynomial ring $A = k[x_1, \ldots, x_n]$ over a field $k$ of characteristic $p \geq 0$ such that $p$ does not divide the group order $\# G$. Then for any ideal $I \subseteq R$ we have $I^{\dagger \text{GR}} = I^\dagger = I$. 
\end{Ko}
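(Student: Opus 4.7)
The plan is to reduce to Theorem \ref{daggerregularclose} and Corollary \ref{regulargradeddagger} applied to the polynomial ring $A = k[x_1, \ldots, x_n]$, using that $R = A^G$ is an $R$-module direct summand of $A$. Under the hypothesis $p \nmid \#G$, the Reynolds operator $\rho = \frac{1}{\#G}\sum_{g \in G} g \colon A \to R$ is an $R$-linear retraction of the inclusion $R \hookrightarrow A$, so $IA \cap R = I$ for every ideal $I \subseteq R$. Since $G$ acts linearly on $A$, the standard grading is preserved, and $R$ inherits it as a graded subring; in particular $R \hookrightarrow A$ is a homogeneous map of $\mathbb{N}$-graded domains.

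For the ordinary dagger closure the argument is immediate from the machinery already in place: combining Lemma \ref{daggerextension} with Theorem \ref{daggerregularclose} applied to the regular ring $A$ gives
\begin{equation*}
I^\dagger \subseteq (IA)^\dagger \cap R = IA \cap R = I,
\end{equation*}
and the reverse inclusion is trivial.

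For the graded version one needs a graded analogue of Lemma \ref{daggerextension}, and establishing it is the only non-routine step. Concretely, one would verify that the graded inclusion $R \hookrightarrow A$ induces an inclusion $R^{+ \text{GR}} \hookrightarrow A^{+ \text{GR}}$ on which the two valuations by grading agree: any homogeneous element $s \in R^{+ \text{GR}}$ of degree $d$ satisfies an integral equation $s^m + r_1 s^{m-1} + \cdots + r_m = 0$ over $R$ with $r_i \in R$ homogeneous of degree $id$; since $R \hookrightarrow A$ is graded, the $r_i$ remain homogeneous of the same degrees in $A$, so $s$ is $\mathbb{Q}_{\geq 0}$-homogeneous of degree $d$ in $A^+$ and hence lies in $A^{+ \text{GR}}$ at the same degree by maximality. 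Given $f \in I^{\dagger \text{GR}}$ in $R$ and $\varepsilon > 0$, a witness $a \in R^{+ \text{GR}}$ with $\nu(a) < \varepsilon$ and $af \in IR^{+ \text{GR}}$ then lies in $A^{+ \text{GR}}$ with the same valuation, so $f \in (IA)^{\dagger \text{GR}}$. Corollary \ref{regulargradeddagger} applied to $A$ together with the direct summand property $IA \cap R = I$ then closes the argument. The main potential obstacle is precisely this comparison of the $\text{GR}$-subrings under a graded inclusion of domains; everything else is bookkeeping from results already established.
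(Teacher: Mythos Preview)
Your argument is correct and follows essentially the same route as the paper: persistence (Lemma \ref{daggerextension}) pushes $f$ into $(IA)^\dagger$, triviality of dagger closure in the regular ring $A$ (Theorem \ref{daggerregularclose}) gives $f \in IA$, and a retraction $A \to R$ brings it back to $I$. The only cosmetic difference is that the paper uses the normalised field trace $\tfrac{1}{d}\Tr$ (invoking that $R$ is normal so the trace lands in $R$), whereas you use the Reynolds operator $\tfrac{1}{\#G}\sum_{g\in G} g$, which is slightly more direct since it maps into $A^G = R$ by construction without any appeal to normality. Both are valid $R$-linear splittings.

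For the graded case the paper simply writes ``We only prove the non-graded case'', so you have in fact supplied more than the paper does. Your identification of the needed ingredient---a graded analogue of Lemma \ref{daggerextension}, i.e.\ $I^{\dagger\text{GR}} \subseteq (IA)^{\dagger\text{GR}} \cap R$ for a graded extension $R \hookrightarrow A$---is exactly right, and the paper itself uses such a persistence step elsewhere (e.g.\ in Corollary \ref{gradeddaggerintegralcl}) without spelling it out. Your sketch of why $R^{+\text{GR}}$ embeds grade-compatibly into $A^{+\text{GR}}$ is reasonable; one can also short-circuit it by noting that $R \subseteq A$ is finite, so $R^+ = A^+$, and any $\mathbb{Q}_{\ge 0}$-grading extending that of $A$ a fortiori extends that of $R$, whence $A^{+\text{GR}} \subseteq R^{+\text{GR}}$ by maximality, which already suffices for the witness to pass from $R^{+\text{GR}}$ to $A^{+\text{GR}}$ (and in fact forces equality).
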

\begin{proof}
We only prove the non-graded case.
If $f \in I^\dagger$ then we have $f \in (IA)^\dagger$ by Lemma \ref{daggerextension}. But by Theorem \ref{daggerregularclose} $(IA)^\dagger = IA$. Since $R$ is normal and $(p, \# G) = 1$ the field trace $\frac{1}{d} \Tr$ splits the inclusion $R \to A$, where $d$ is the degree of $Q(A)$ over $Q(R)$. In particular, we have $IA \cap R = I$.
\end{proof}

Recall that for a local noetherian ring $(R, \mathfrak{m})$ of dimension $d$ and parameters $(x_1, \ldots, x_d)$ an element $1/(x_1 \cdots x_d)$ in $H^d_{\mathfrak{m}}(R)$ is called a \emph{paraclass}.

\begin{Ko}
\label{Paraclassnotalmostzero}
Let $(R,\mathfrak{m})$ be an excellent local domain of dimension $d$ containing a field $k$. Then no paraclass $c \in H^d_\mathfrak{m}(R)$ is almost zero.
\end{Ko}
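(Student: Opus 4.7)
The plan is to argue by contradiction in much the same manner as the proof of Theorem \ref{daggerregularclose}, with one clean simplification: the flatness of a big Cohen--Macaulay algebra over $R$ (which forced $R$ regular there) can be dispensed with here, because parameters form a regular sequence on any big balanced Cohen--Macaulay algebra $A$ and so the paraclass in $H^d_\mathfrak{m}(A)$ is annihilated only by elements of the parameter ideal.

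First I would reduce to the case where $R$ is a complete local noetherian domain. Replace $R$ by $\hat R / P$ for a minimal prime $P$ of $\hat R$ with $\dim \hat R / P = d$; such $P$ exists because $\dim \hat R = d$, and $R \hookrightarrow \hat R / P$ since $P$ contracts to the minimal prime $(0)$ of $R$. The images of the $x_i$ remain a system of parameters in $\hat R/P$ by the dimension count, and fixing an embedding of algebraic closures produces a map $R^+ \to (\hat R / P)^+$ along which any valuation on the target restricts to a valuation on the source; this transfers almost-zeroness of $c$ to the image paraclass. I would then use Lemma \ref{Lvaluation} to fix a rank-one valuation $\nu$ on $R$ which is non-negative on $R$ and positive on $\mathfrak{m}$ and extend it to a rank-one valuation on $R^+$.

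Now assume $c = 1/(x_1 \cdots x_d)$ is almost zero. For every $\eps > 0$ this yields an element $a \in R^+$ with $\nu(a) < \eps$ and $a \cdot (c \otimes 1) = 0$ in $H^d_{\mathfrak{m}}(R) \otimes_R R^+ = H^d_{(x_1, \ldots, x_d) R^+}(R^+)$; via the \v{C}ech complex this translates to a relation
\[
a \, (x_1 x_2 \cdots x_d)^{n-1} = \sum_{i=1}^d b_i x_i^n
\]
in $R^+$ for some $n \geq 1$ and $b_i \in R^+$. These finitely many elements live in some finite domain extension $R \subseteq S \subseteq R^+$, which is again complete local by \cite[Corollary 7.6]{Eisenbud}. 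By the weak functoriality of big balanced Cohen--Macaulay algebras (\cite[Theorem 3.9]{hochsterhunekeapplicationsBCMA}) I would then choose such an algebra $A$ for $S$. Since $(x_1, \ldots, x_d)$ is a system of parameters on the finite integral extension $S$, it is a regular sequence on $A$; combining the displayed relation (which a fortiori holds in $A$) with the standard colon identity $(x_1^n, \ldots, x_d^n) : (x_1 \cdots x_d)^{n-1} = (x_1, \ldots, x_d)$ for regular sequences forces $a \in \mathfrak{m} A$. Finally, Lemma \ref{ValuationExtensionFiniteNonnegative} ensures $\nu$ is non-negative on $S$, and Lemma \ref{ValuationExtensionMapping} applied to $S$ (noting $S^+ = R^+$ as $S/R$ is finite algebraic) yields $\nu(a) \geq \min_i \nu(x_i) > 0$, contradicting $\nu(a) < \eps$ once $\eps$ is chosen small enough.

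The conceptual core of the argument is the proof of Theorem \ref{daggerregularclose} stripped of its flatness ingredient, and the step I expect to require the most careful bookkeeping is the initial reduction to the complete local case: one must verify that almost-zeroness of the paraclass really is preserved under the passage $R \to \hat R / P$, which amounts to correctly coordinating the valuations on $R^+$ and on $(\hat R / P)^+$ along a fixed embedding of algebraic closures.
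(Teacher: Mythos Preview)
Your argument is correct, and the core contradiction step is actually somewhat cleaner than the paper's. The paper reduces to the complete local case by first normalising (using excellence to keep the normalisation finite) and then completing (using that the completion of a normal excellent local domain is a domain); it then applies complete Noether normalisation to obtain a \emph{regular} subring $T = k[[x_1,\ldots,x_d]] \subseteq R$ with $T^+ = R^+$, so that the paraclass lives in $H^d_{\mathfrak{m}_T}(T)$ and the argument of Theorem \ref{daggerregularclose} (which needs flatness of the big Cohen--Macaulay algebra over a regular base) applies verbatim. Your route bypasses the Noether normalisation entirely: you observe that for a paraclass the annihilator computation requires only that $x_1,\ldots,x_d$ be a regular sequence on $A$, which is automatic for a big balanced Cohen--Macaulay algebra over $S$, so the colon identity $(x_1^n,\ldots,x_d^n):(x_1\cdots x_d)^{n-1}=(x_1,\ldots,x_d)$ does the job directly without flatness. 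What the paper's approach buys is a literal reuse of Theorem \ref{daggerregularclose}, whereas yours isolates exactly why regularity was needed there and why it is dispensable here. Two minor remarks: your colon identity uses that permutations and power-substitutions of the $x_i$ remain regular sequences on $A$, which holds because they remain systems of parameters for $S$ and $A$ is \emph{balanced}; and strictly speaking the colon identity gives $a \in (x_1,\ldots,x_d)A$ rather than $\mathfrak{m}A$, but this is what you feed into Lemma \ref{ValuationExtensionMapping} anyway. Your reduction via $\widehat R/P$ is also correct, though the paper's normalise-then-complete manoeuvre avoids the bookkeeping with $R^+ \to (\widehat R/P)^+$ that you rightly flag as the delicate point.
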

\begin{proof}
Let $x_1, \ldots, x_d$ be parameters and $c = 1/(x_1 \cdots x_d)$ the corresponding para\-class.
We may assume that $R$ is normal. For if $c$ is almost zero then a fortiori $c$ is almost zero in $H^d_{\mathfrak{m}S}(S)$, where $S$ is the normalisation of $R$.
Since $H^d_\mathfrak{m}(R) = H^d_\mathfrak{m}(\widehat{R})$ and $R$ is excellent and normal we may moreover assume that $R$ is a complete local domain.

By virtue of complete Noether normalisation (see \cite[Theorem A.22]{brunsherzog}) the ring $T = k[[x_1, \ldots, x_d]]$ is a regular local ring over which $R$ is finite. In particular, $T^+ = R^+$ and since the annihilator of $c \in H^d_\mathfrak{m}(T)$ is $(x_1, \ldots, x_d)$ (cf. \cite[Lemma 2.8]{brennerparasolid}) it cannot be almost zero by arguments as in the proof of Theorem \ref{daggerregularclose}.
\end{proof}

Next we shall need the notion of a parasolid algebra as introduced in \cite{brennerparasolid}.
Since we are not concerned with the mixed characteristic case here we may state a simpler definition.

\begin{Def}
Let $R$ denote a local noetherian ring of dimension $d$ containing a field. An $R$-algebra $A$ is called \emph{parasolid} if the image of every paraclass $c \in H^d_\mathfrak{m}(R)$ in $H^d_\mathfrak{m}(A)$ does not vanish.

An algebra $A$ over noetherian ring $R$ containing a field is called \emph{parasolid} if $A_\mathfrak{m}$ is parasolid over $R_\mathfrak{m}$ for every maximal ideal $\mathfrak{m}$ of $R$.
\end{Def}

\begin{Le}
\label{parasolidfinite}
Let $R \subseteq S$ be a finite extension of noetherian domains containing a field $k$ and let $A$ be an $R$-algebra. If $A \otimes_R S$ is parasolid as an $S$-algebra then $A$ is parasolid.
\end{Le}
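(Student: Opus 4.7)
The plan is, for each maximal ideal $\mathfrak{m} \subseteq R$, to show that every paraclass $c = [1/(x_1\cdots x_d)] \in H^d_\mathfrak{m}(R_\mathfrak{m})$ has nonzero image in $H^d_\mathfrak{m}(A_\mathfrak{m})$. First I would invoke lying over (valid since $R \subseteq S$ is integral) to pick a maximal $\mathfrak{n} \subseteq S$ contracting to $\mathfrak{m}$. Finiteness together with incomparability give $\dim S_\mathfrak{n} = \dim R_\mathfrak{m} = d$ and that $\mathfrak{m}S_\mathfrak{n}$ is $\mathfrak{n}S_\mathfrak{n}$-primary; hence $x_1,\ldots,x_d$ remains a system of parameters in $S_\mathfrak{n}$, and $c_S := [1/(x_1 \cdots x_d)] \in H^d_{\mathfrak{n}S_\mathfrak{n}}(S_\mathfrak{n})$ is a paraclass.

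Second, I would pass to local cohomology in the commutative square
\[
\xymatrix{
R_\mathfrak{m} \ar[r] \ar[d] & A_\mathfrak{m} \ar[d] \\
S_\mathfrak{n} \ar[r] & (A \otimes_R S)_\mathfrak{n}.
}
\]
Writing $B := (A \otimes_R S)_\mathfrak{n}$, naturality of the \v{C}ech complex on the parameters $x_1,\ldots,x_d$ produces an induced commutative square of $d$-th local cohomology groups whose bottom row is a morphism $H^d_{\mathfrak{n}S_\mathfrak{n}}(S_\mathfrak{n}) \to H^d_{\mathfrak{n}}(B)$ sending the paraclass $c_S$ to the image of $c_S$ in $B$. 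By the hypothesis that $A\otimes_R S$ is parasolid over $S$, applied at $\mathfrak{n}$, this image is nonzero. Chasing $c$ through the other path of the diagram then forces its image in $H^d_\mathfrak{m}(A_\mathfrak{m})$ to be nonzero as well; otherwise it would already vanish before being pushed into $H^d_\mathfrak{n}(B)$. Since $\mathfrak{m}$ was an arbitrary maximal ideal of $R$, this shows $A$ is parasolid.

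The main technical point is simply the identification of the groups in the bottom row: for an $S$-module $M$ one has $H^d_\mathfrak{m}(M) = H^d_{\mathfrak{m}S}(M)$ (same \v{C}ech complex), and after localising at $\mathfrak{n}$ only the $\mathfrak{n}$-summand of the latter survives. This is entirely routine, and no input beyond lying over, incomparability, and functoriality of local cohomology is required.
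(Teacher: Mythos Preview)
Your argument is essentially the same as the paper's: pick a maximal ideal $\mathfrak{n}$ of $S$ over $\mathfrak{m}$, observe that the parameters $x_1,\ldots,x_d$ remain parameters in $S_\mathfrak{n}$, and chase the paraclass through the commutative square of local cohomology groups. The only structural difference is that the paper first reduces to the case where $R$ is complete local (citing \cite[Proposition 1.5]{brennerparasolid} and \cite[Corollary 7.6]{Eisenbud}), whereas you work directly with localisations; both are fine.

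One small imprecision: the assertion that ``finiteness together with incomparability give $\dim S_\mathfrak{n} = \dim R_\mathfrak{m}$'' is not quite right as stated. Incomparability yields only $\dim S_\mathfrak{n} \leq \dim R_\mathfrak{m}$; the reverse inequality would need going-down, which is not available since $R$ is not assumed normal. What is true is that $S_\mathfrak{m}$ (the semilocalisation of $S$ at $R\setminus\mathfrak{m}$) is a finite extension of the local domain $R_\mathfrak{m}$, hence $\dim S_\mathfrak{m} = d$, so \emph{some} maximal ideal $\mathfrak{n}$ over $\mathfrak{m}$ has height $d$. You should choose such an $\mathfrak{n}$ rather than an arbitrary one. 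The paper's completion step sidesteps this entirely: once $R$ is complete local and $S$ is a finite domain extension, $S$ is itself local, so there is a unique $\mathfrak{n}$ and $\dim S_\mathfrak{n} = \dim S = \dim R = d$ automatically.
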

\begin{proof}
We may assume that $R$ is a local complete noetherian ring with maximal ideal $\mathfrak{m}$ and that $R \subseteq S$ is quasi-local (cf.\ \cite[Proposition 1.5]{brennerparasolid}, \cite[Corollary 7.6]{Eisenbud} and recall that completion is flat). Fix a paraclass $c = 1/(x_1 \cdots x_d)$ in $H^d_\mathfrak{m}(R)$. Let $\mathfrak{n}$ be a maximal ideal in $S$ containing $(x_1, \ldots, x_d)$. Localising $S$ at $\mathfrak{n}$ one has that $x_1, \ldots, x_d$ are parameters for $S_\mathfrak{n}$. In particular, the image of $c$ in $H^d_{\mathfrak{n}S_\mathfrak{n}}(S_\mathfrak{n})$ is nonzero.
We have a commutative diagram
\[
\begin{xy}
 \xymatrix{H^d_{\mathfrak{n}S_\mathfrak{n}}(S_\mathfrak{n}) \ar[r]& H^d_{\mathfrak{n}S_\mathfrak{n}}(A \otimes_R S_\mathfrak{n})\\
H^d_\mathfrak{m}(R) \ar[u] \ar[r] & H^d_\mathfrak{m}(A) \ar[u]}
\end{xy}
\]
and since $A \otimes_R S$ is parasolid $c$ cannot vanish in $H^d_\mathfrak{m}(A)$.
\end{proof}

\begin{Ko}
\label{daggerimpliesparasolid}
Let $R$ be an excellent domain of dimension $d$ containing a field and $I = (f_1, \ldots, f_n)$ an ideal. If $f \in I^\dagger$ then the forcing algebra for $(f, I)$ is parasolid.
\end{Ko}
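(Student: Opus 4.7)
The plan is to argue by contradiction using Corollary \ref{Paraclassnotalmostzero}. Write $B = R[T_1, \ldots, T_n]/(f + f_1T_1 + \cdots + f_nT_n)$ for the forcing algebra, and suppose $B$ is not parasolid. First I would reduce to the case that $R$ is a complete local excellent normal domain: replace $R$ by its normalisation $R^N$ using Lemma \ref{parasolidfinite} (applicable since $R^N$ is finite over the excellent domain $R$) together with Lemma \ref{daggerextension} to preserve $f \in I^\dagger$; then localise at a maximal ideal $\mathfrak{n}$; then complete, noting that paraclasses of $R^N_\mathfrak{n}$ are paraclasses of $\widehat{R^N_\mathfrak{n}}$ and that non-vanishing after completion forces non-vanishing beforehand via the natural map $H^d_\mathfrak{n}(A) \to H^d_\mathfrak{n}(A \otimes \widehat{R^N_\mathfrak{n}})$. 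Forcing algebras are preserved under base change. In this reduced setting, non-parasolid-ness gives a paraclass $c = 1/(x_1 \cdots x_d) \in H^d_\mathfrak{m}(R)$ whose image vanishes in $H^d_\mathfrak{m}(B)$.

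This vanishing is encoded by a fixed relation $(x_1 \cdots x_d)^{M-1} = \sum_{i=1}^d x_i^M b_i(T)$ in $B$, which lifts to an equation
\[
(x_1 \cdots x_d)^{M-1} - \sum_{i=1}^d x_i^M b_i(T) \;=\; \bigl(f + \sum_j f_j T_j\bigr) \, g(T)
\]
in $R[T_1, \ldots, T_n]$ for some $g(T)$, with fixed polynomial representatives $b_i(T)$ of bounded total degree $K$. Now fix any rank-one valuation $\nu$ on $R^+$ non-negative on $R$ and positive on $\mathfrak{m}$ (as in the proof of Theorem \ref{daggerregularclose}) and any $\delta > 0$. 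Since $f \in I^\dagger$, I pick $a \in R^+$ with $\nu(a) < \delta/K$ and $af = -\sum_j a_j f_j$ for suitable $a_j \in R^+$. The assignment $T_j \mapsto a_j/a$ then defines an $R$-algebra homomorphism $\varphi \colon B \to R^+[1/a]$, since the forcing relation maps to zero.

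Applying $\varphi$ to the lifted equation and multiplying by $a^K$ clears all denominators (because each $b_i$ has total degree at most $K$), yielding
\[
a^K (x_1 \cdots x_d)^{M-1} \in (x_1^M, \ldots, x_d^M)\, R^+.
\]
Hence $a^K$ annihilates the image of $c$ in $H^d_\mathfrak{m}(R^+)$. Since top local cohomology is the cokernel of localisations and localisation commutes with tensor product, $H^d_\mathfrak{m}(R) \otimes_R R^+ = H^d_\mathfrak{m}(R^+)$, so $a^K$ annihilates $c \otimes 1$ there. As $\nu(a^K) = K\nu(a) < \delta$ and both $\delta > 0$ and $\nu$ were arbitrary, $c$ is almost zero in $H^d_\mathfrak{m}(R)$, contradicting Corollary \ref{Paraclassnotalmostzero}.

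The main obstacle is ensuring the degree bound $K$ is fixed independently of the choice of $a$: once the vanishing $(x_1 \cdots x_d)^{M-1} \in (x_1^M, \ldots, x_d^M)B$ is written as a single equation in $B$ and its polynomial representatives $b_i(T)$ are chosen, $K$ is determined before any $a$ is selected, so the scaling trick $\nu(a) < \delta/K$ is legitimate. The reductions themselves are routine but require some care with the compatibility of dagger closure and forcing algebras under localisation, normalisation and completion; they follow patterns already present in Lemma \ref{parasolidfinite} and in the proof of Theorem \ref{daggerregularclose}.
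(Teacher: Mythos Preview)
Your proof is correct and follows essentially the same approach as the paper's: reduce to the complete local normal domain case via Lemma \ref{parasolidfinite} and Lemma \ref{daggerextension}, translate the failure of parasolidity into a fixed relation $(x_1\cdots x_d)^{M-1} \in (x_1^M,\ldots,x_d^M)B$, use the dagger-closure witnesses to produce homomorphisms $B \to Q(R^+)$ via $T_j \mapsto a_j/a$, and observe that a single exponent (your $K$, the paper's $l$) suffices to clear denominators independently of $a$, so the paraclass becomes almost zero in contradiction to Corollary \ref{Paraclassnotalmostzero}. Your explicit bookkeeping with the degree bound $K$ and the choice $\nu(a) < \delta/K$ is just a rephrasing of the paper's remark that ``since the $b_j$ are polynomials in the $T_i$ the exponent $l$ does not depend on $a_0$.''
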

\begin{proof}
Since we may pass to finite ring extensions we can assume that $R$ is normal (this is clear for dagger closure and follows from Lemma \ref{parasolidfinite} for the parasolid case).
By Lemma \ref{daggerextension} we have that $f \in (IR_\mathfrak{m})^\dagger$ for every maximal ideal $\mathfrak{m}$ of $R$ and we may also pass to the completion (this does not affect parasolidity -- see \cite[Proposition 1.5]{brennerparasolid}). Hence, we may assume that $(R,\mathfrak{m})$ is a complete local domain of dimension $d$.

Assume to the contrary that the forcing algebra \[ B = R[T_1, \ldots, T_n]/(\sum_{i=1}^n f_i T_i - f_0) \] is not parasolid. That is, there exists a paraclass $1/(x_1 \cdots x_d)$ coming from $H^d_\mathfrak{m}(R)$ which vanishes in $H^d_\mathfrak{m}(B)$. This is the case if and only if we have an equation \[b_1 x_1^{t+1} + \ldots + b_d x_d^{t+1} = (x_1 \cdots x_d)^t,\] where $b_i \in B$ (see \cite[Remark 9.2.4 (b) and the discussion at the beginning of Section 9.3]{brunsherzog}).

Since $f \in I^\dagger$ we have for every $\eps >0$ a relation $a_0 f_0 = \sum_{i=1}^n a_i f_i$ with $a_i \in R^+$ and $\nu(a_0) < \eps$. Hence, in $Q(R^+)$ we have $f_0 = \sum_{i=1}^n \frac{a_i}{a_0} f_i$. These relations induce homomorphisms \[\varphi_\eps: B \longrightarrow Q(R^+), \quad T_i \longmapsto \frac{a_i}{a_0}.\] This in turn induces a relation \[a_0^l \sum_{j=1}^d \varphi_\eps(b_j) x_j^{t+1} = a_0^l (x_1 \cdots x_d)^t\] in the $x_j$ with coefficients in $R^+$ for sufficiently large $l$. Since the $b_j$ are polynomials in the $T_i$ the exponent $l$ does not depend on $a_0$. Thus we find that $1/(x_1 \cdots x_d) \in H^d_\mathfrak{m}(R)$ is almost zero. This is a contradiction in light of Corollary \ref{Paraclassnotalmostzero}.
\end{proof}

\begin{Bem}
Since we do not have general persistence results for dagger closure we cannot extend Corollary \ref{daggerimpliesparasolid} to the universally parasolid case.
The result of Corollary \ref{daggerimpliesparasolid} also holds for graded dagger closure if we only consider homogeneous ideals and only localise at $R_+$ by virtue of Lemma \ref{gradedvaluationextension}.
\end{Bem}

This result allows us to recover (and extend) an inclusion we proved in \cite{brennerstaeblerdaggersolid} using geometric methods. We refer to \cite{hochstersolid} for a definition and background on solid closure.

\begin{Ko}
\label{gradeddaggerinsolidgeneral}
Let $R$ be an $\mathbb{N}$-graded domain finitely generated over a field $k = R_0$ of dimension $d$ and $I$ a homogeneous ideal of height $d$ (e.\,g.\ $R_+$-primary). Then graded dagger closure is contained in solid closure.
\end{Ko}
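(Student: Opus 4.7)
The plan is to combine the graded analogue of Corollary \ref{daggerimpliesparasolid} (announced in the Remark immediately above) with the tautology that parasolidity is stronger than solidity, after first reducing to the local ring $R_{R_+}$.

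Since $R$ is a finitely generated $\mathbb{N}$-graded domain over the field $k = R_0$, the irrelevant ideal $R_+$ is a maximal ideal of height $d$ and is the unique homogeneous prime of that height. Because $I$ is homogeneous, its minimal primes are homogeneous (cf.\ \cite[Lemma 1.5.6]{brunsherzog}), of height $d$, and therefore all equal to $R_+$. Hence $\sqrt{I} = R_+$, so $I$ is $R_+$-primary and $R_+$ is the only maximal ideal containing $I$. Consequently $f \in I^\star$ reduces to $f \in (IR_{R_+})^\star$, which in the local ring $(R_{R_+}, \mathfrak{m})$ of dimension $d$ amounts to $H^d_\mathfrak{m}(B_{R_+}) \neq 0$ for the forcing algebra $B = R[T_1, \ldots, T_n]/(\sum_i f_i T_i - f)$.

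Next I would apply the graded analogue of Corollary \ref{daggerimpliesparasolid}: for $f \in I^{\dagger \text{GR}}$, every $\eps > 0$ yields a relation $a_0 f = \sum_i a_i f_i$ with $a_i \in R^{+\text{GR}}$ and grading valuation $\nu(a_0) < \eps$. Running the argument of Corollary \ref{daggerimpliesparasolid} verbatim, but substituting the valuation from Lemma \ref{gradedvaluationextension} for the one in Lemma \ref{Lvaluation} and working throughout in $R^{+\text{GR}}$, shows that $B$ is parasolid at $R_+$: every paraclass $1/(x_1 \cdots x_d) \in H^d_\mathfrak{m}(R_{R_+})$ has nonzero image in $H^d_\mathfrak{m}(B_{R_+})$. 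A fortiori $H^d_\mathfrak{m}(B_{R_+}) \neq 0$, which is the solidity of $B_{R_+}$ over $R_{R_+}$, whence $f \in I^\star$.

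The only real subtlety is the invocation of the graded analogue of Corollary \ref{daggerimpliesparasolid}, but this is precisely what the preceding Remark records, and Lemma \ref{gradedvaluationextension} supplies the valuation needed to run the final contradiction step in the graded setting.
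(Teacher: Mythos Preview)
Your argument is essentially the paper's: reduce to the single maximal ideal $R_+$, invoke the graded analogue of Corollary~\ref{daggerimpliesparasolid} to get parasolidity of the forcing algebra at $R_+$, and conclude $H^d_{R_+}(B)\neq 0$, hence solidity. The only point you gloss over is the equivalence ``$f\in (IR_{R_+})^\star$ amounts to $H^d_\mathfrak{m}(B_{R_+})\neq 0$'': this requires that $\widehat{R_{R_+}}$ be a \emph{domain} (otherwise solid closure demands the check modulo each minimal prime of the completion), which the paper secures by first normalising, but which you already have from Lemma~\ref{gradedvaluationextension} --- you should cite it at that step, together with flat base change for local cohomology.
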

\begin{proof}
To begin with, we may assume $R$ to be normal. Then for a given ideal $I$ and an element $f$ in $R$ with forcing algebra $A$ we have that $f \in I^\star$ if and only if $H^d_{R_+}(A) \neq 0$ (note that since $R$ is normal and excellent $R' = \widehat{R_{R_+}}$ is integral, and $H^d_{\widehat{R_+}}(R' \otimes_R A) = H^d_{R_+}(A)$ by flat base change \cite[Theorem 4.3.2]{brodmannsharp}, since $H^d_\mathfrak{m}(A) = H^d_\mathfrak{m}(R) \otimes A$ and because $H^d_{\widehat{\mathfrak{m}}}(\widehat{R}) = H^d_\mathfrak{m}(R)$. Moreover, $R_+$ is the only maximal ideal over $I$ we only need to look at local cohomology with respect to $R_+$). But by the previous remark $f \in I^{\dagger \text{GR}}$ implies $H^d_{R_+}(A) \neq 0$.
\end{proof}

\begin{Bem}
We remark that the condition that $\height I$ be equal to the dimension of $R$ is necessary for this approach. For if $\height I < \dim R$ then there is some non-homogeneous maximal ideal $\mathfrak{m}$ containing $I$. The valuation on $R$ extends to $R_\mathfrak{m}$ but it is no longer non-negative. Hence, any element of $H^d_\mathfrak{m}(A)$ is almost zero with respect to this valuation.
\end{Bem}

This Corollary also has interesting geometric consequences -- cf.\ \cite[Remark 2.8 (b)]{staeblerdaggerabelian}.

\begin{Ko}
\label{daggerinsolid}
Let $R$ be an excellent domain of dimension $d$ containing a field. Then dagger closure is contained in solid closure. 
\end{Ko}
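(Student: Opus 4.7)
The plan is to deduce this corollary almost directly from Corollary \ref{daggerimpliesparasolid}, using the elementary observation that a parasolid algebra is in particular solid. Given $f \in I^\dagger$ with $I = (f_1, \ldots, f_n)$, let $B = R[T_1, \ldots, T_n]/(\sum f_i T_i - f)$ be the forcing algebra. Solid closure can be tested locally, so I would first localize at a maximal ideal $\mathfrak{m}$ containing $I$; by Lemma \ref{daggerextension} we still have $f \in (IR_\mathfrak{m})^\dagger$, and by the behaviour of local cohomology under completion (flat base change, as used in the proof of Corollary \ref{gradeddaggerinsolidgeneral}) it suffices to verify the non-vanishing of $H^d_\mathfrak{m}(B)$ after passing to $\widehat{R_\mathfrak{m}}$, which by excellence and after replacing $R$ by its normalisation is a complete local domain of dimension $d$ containing a field.

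Once we are in this complete local setting, Corollary \ref{daggerimpliesparasolid} applies and tells us that $B$ is parasolid over $R$. By definition this means that for any paraclass $c = 1/(x_1 \cdots x_d) \in H^d_\mathfrak{m}(R)$ coming from a system of parameters, the image of $c$ in $H^d_\mathfrak{m}(B)$ is non-zero. In particular $H^d_\mathfrak{m}(B) \neq 0$, which is exactly the defining condition for $B$ to be a solid $R$-algebra in the complete local case, and hence $f \in I^\star$.

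The main (and really only) obstacle is the bookkeeping of the reduction to the complete local normal case: checking that dagger closure, solid closure, and parasolidity all behave compatibly with localisation at a maximal ideal, with passage to the normalisation (finite extension), and with completion. For dagger closure this is Lemma \ref{daggerextension}; for parasolidity and solid closure the relevant compatibilities are recorded in \cite[Proposition 1.5]{brennerparasolid} and follow from flat base change for local cohomology together with the fact that $R^+ = S^+$ for a finite extension $R \subseteq S$. Once these routine reductions are in place, the implication parasolid $\Rightarrow$ solid is immediate from the definitions, so no further work is required.
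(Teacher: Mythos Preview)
Your proposal is correct and follows essentially the same route as the paper: reduce to the normal case, localise at a maximal ideal containing $I$ via Lemma~\ref{daggerextension}, invoke Corollary~\ref{daggerimpliesparasolid} to get parasolidity of the forcing algebra, and read off $H^d_\mathfrak{m}(A) \neq 0$ as the solid closure condition. The paper's proof is terser (it leaves the completion step implicit inside Corollary~\ref{daggerimpliesparasolid} and the flat base change for local cohomology), but the logical skeleton is identical.
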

\begin{proof}
We may assume that $R$ is normal.
Let $I$ be an ideal and $f \in I^\dagger$. Then by Lemma \ref{daggerextension} we have that $f \in (IR_\mathfrak{m})^\dagger$ for every maximal ideal $\mathfrak{m}$ containing $I$. Thus Corollary \ref{daggerimpliesparasolid} implies that $H^d_\mathfrak{m}(A) \neq 0$, where $A$ is a forcing algebra for the data $R_\mathfrak{m}, f, I$. Hence, $f \in I^\star$.
\end{proof}

\begin{Bem}
Note that both the inclusions of Corollary \ref{gradeddaggerinsolidgeneral} and \ref{daggerinsolid} are strict in general. This is due to Roberts' example showing that ideals in a regular ring of dimension $\geq 3$ need not be solidly closed (cf.\ \cite[Discussions 7.22 and 7.23 or Corollary 7.24]{hochstersolid}). Specifically, one has $x^2y^2z^2 \in (x^3,y^3,z^3)^\star$ in $\mathbb{Q}[x,y,z]$ but $x^2y^2z^2 \notin (x^3,y^3,z^3)^\dagger$ by Theorem \ref{daggerregularclose}.
\end{Bem}

\begin{Ko}
\label{gradedaggerinintegral}
Let $R$ be an $\mathbb{N}$-graded domain finitely generated over a field $k = R_0$ and let $I$ be a homogeneous ideal of $\height I = \dim R$. Then $I^{\dagger \text{GR}} \subseteq \bar{I}$.
\end{Ko}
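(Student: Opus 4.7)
Since $I$ is homogeneous with $\height I = \dim R$, and $R_+$ is the unique homogeneous prime of height $\dim R$ in the catenary graded domain $R$ (finitely generated over a field), the ideal $I$ is $R_+$-primary. My plan is to verify the valuative criterion for integral closure: $f \in \bar I$ iff $\mu(f) \geq \mu(I)$ for every rank-one valuation $\mu$ of $Q(R)$ non-negative on $R$. The inequality is automatic when $\mu(I) = 0$; otherwise the center of $\mu$ must contain $\sqrt{I} = R_+$, so $\mu$ is positive on $R_+$.

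Given such $\mu$, I would complete $R$ at $R_+$ to obtain the complete local noetherian integral domain $\widehat{R_{R_+}}$ (by Lemma \ref{gradedvaluationextension}), extend both $\mu$ and the grading valuation $\nu$ to this completion, and invoke Izumi's theorem (as in the proof of Corollary \ref{KoNervensaege}) to produce a constant $C > 0$ with $\mu \leq C\nu$ on $\widehat{R_{R_+}}$. The hypothesis $f \in I^{\dagger \text{GR}}$ furnishes, for each $\eps > 0$, an element $a \in R^{+\text{GR}}$ with $\nu(a) < \eps$ and a representation $af = \sum_i a_i f_i$ in $R^{+\text{GR}}$. Extending $\mu$ to $R^+$ keeps it non-negative on $R^{+\text{GR}}$ by Lemma \ref{ValuationExtensionFiniteNonnegative}, and applying $\mu$ to the relation yields $\mu(f) \geq \mu(I) - \mu(a)$. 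Provided $\mu(a) \leq C\nu(a) < C\eps$, letting $\eps \to 0$ concludes $\mu(f) \geq \mu(I)$.

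The principal obstacle is bounding $\mu(a)$ by a uniform multiple of $\nu(a)$ for $a \in R^{+\text{GR}}$, since Izumi was established on $\widehat{R_{R_+}}$ rather than on the integral extension. I would address this by passing $a$ to a finite normal graded extension $S$ of $R$ inside $R^{+\text{GR}}$, applying Izumi on the completion of $S$ at a maximal ideal lying over $R_+$ to obtain $\mu(a) \leq C_S \nu(a)$; controlling the constant $C_S$ independently of $S$ (so the limit $\eps \to 0$ survives) is the delicate step. A cleaner alternative avoids this by invoking Corollary \ref{gradeddaggerinsolidgeneral}, which already gives $I^{\dagger\text{GR}} \subseteq I^\star$, and then using that solid closure of an $R_+$-primary ideal is contained in its integral closure, a known consequence of Hochster's solid closure theory in equal characteristic.
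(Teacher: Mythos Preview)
Your ``cleaner alternative'' in the last sentence is exactly the paper's proof: Corollary~\ref{gradeddaggerinsolidgeneral} gives $I^{\dagger\text{GR}} \subseteq I^\star$, and \cite[Theorem~5.10]{hochstersolid} gives $I^\star \subseteq \bar I$ for \emph{any} ideal in a noetherian ring (no $R_+$-primary or equal-characteristic hypothesis is needed for that containment). So the short route you mention at the end is correct and is what the paper does.

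Your primary approach via Izumi, however, has a genuine gap precisely where you flag it. Izumi's linear comparability $\mu \leq C\nu$ is a statement on a fixed analytically irreducible local ring; when you pass to a finite graded extension $S \subseteq R^{+\text{GR}}$ and apply Izumi on $\widehat{S}$, the constant becomes some $C_S$ depending on $S$, and there is no reason for $\sup_S C_S$ to be finite as $S$ ranges over all finite extensions in $R^{+\text{GR}}$. Since the multiplier $a$ with $\nu(a)<\eps$ may require larger and larger $S$ as $\eps \to 0$, the bound $\mu(a) \leq C_S\,\nu(a) < C_S\eps$ does not force $\mu(a)\to 0$. This is not a matter of choosing $S$ more carefully: Izumi-type bounds simply do not pass to direct limits of finite extensions, and the whole content of the dagger hypothesis lives in that limit. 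So as written the Izumi route does not close; the solid-closure route is the one that works, and you already identified it.
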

\begin{proof}
Solid closure is contained in integral closure for noetherian rings by \cite[Theorem 5.10]{hochstersolid}. Thus the result follows from Corollary \ref{gradeddaggerinsolidgeneral}.
\end{proof}

As another immediate application we obtain an exclusion bound for graded dagger closure. This was proven for tight closure in \cite[Theorem 2.2]{smithgraded}.

\begin{Ko}
Let $R$ be an $\mathbb{N}$-graded domain of dimension $d \geq 2$ finitely generated over an algebraically closed field $k = R_0$ and $(f_1, \ldots, f_n)$ an $R_+$-primary homogeneous ideal, where the $f_i$ are homogeneous of degrees $d_i$. Let $f_0$ be another homogeneous element of degree $d_0 \leq \min_i d_i$. Then $f_0 \in (f_1, \ldots, f_n)^{\dagger \text{GR}}$ is only possible if $f_0 \in (f_1, \ldots, f_n)$. Likewise, $f_0 \in (f_1, \ldots, f_n)^{\dagger}$ is in this case only possible if $f_0 \in (f_1, \ldots, f_n)$.
\end{Ko}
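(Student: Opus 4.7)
The plan is to combine the containment of (graded) dagger closure in integral closure with a homogeneous degree count on the integral equation, in the spirit of the tight-closure argument of \cite[Theorem 2.2]{smithgraded}. Let $d_{\min} = \min_i d_i$ and $J = \{j : d_j = d_{\min}\}$.

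Since $I$ is $R_+$-primary we have $\height I = \dim R$, so Corollary \ref{gradedaggerinintegral} (resp.\ Proposition \ref{daggerintegral} in the non-graded case) yields $f_0 \in \bar{I}$. Extracting the homogeneous component of degree $m d_0$ from the integral equation gives
\[
f_0^m + a_1 f_0^{m-1} + \cdots + a_m = 0
\]
with each $a_i \in I^i$ homogeneous of degree $id_0$. Expanding $a_i = \sum_{|\alpha|=i} c_\alpha f_1^{\alpha_1} \cdots f_n^{\alpha_n}$ with $c_\alpha \in R$ homogeneous of degree $id_0 - \sum_j \alpha_j d_j$, this degree must be nonnegative, while $\sum_j \alpha_j d_j \geq i d_{\min} \geq i d_0$. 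Hence equality holds throughout, forcing $d_0 = d_{\min}$, $c_\alpha \in R_0 = k$, and $\alpha$ supported on $J$.

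When $d_0 < d_{\min}$ strictly, no admissible $\alpha$ exists, every $a_i$ vanishes, and $f_0^m = 0$ gives $f_0 = 0 \in I$ since $R$ is a domain. The main obstacle is the boundary case $d_0 = d_{\min}$: the degree analysis leaves $f_0$ merely integral over the subring $A = k[f_j : j \in J]$ with coefficients in the irrelevant ideal of $A$, and integrality alone does not force $f_0$ into $A$ (witness $xy$, which is integral over $k[x^2,y^2]$ without lying in it). Here the dagger hypothesis must be used beyond its integral-closure consequence. In positive characteristic, Corollary \ref{gradeddaggertight} identifies graded dagger closure with tight closure, whereupon \cite[Theorem 2.2]{smithgraded} applies directly. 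In characteristic zero, one sharpens the underlying valuation argument: from relations $a f_0 = \sum_{j \in J} a_j f_j$ in $R^{+\text{GR}}$ with $a, a_j$ homogeneous of the same degree $\nu(a) < \varepsilon$ (only $j \in J$ contribute, for $\varepsilon < 1$, by the same degree count as above), one uses that $R_0 = k$ is algebraically closed together with the fact that $f_0$ and the $\{f_j : j \in J\}$ all lie in the finite-dimensional $k$-subspace $R_{d_0}$ to extract a $k$-linear combination $f_0 = \sum_{j \in J} \lambda_j f_j \in I$.

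The second (non-graded) assertion is proved by the same strategy, replacing Corollary \ref{gradedaggerinintegral} with Proposition \ref{daggerintegral} and taking the grading valuation on $R^+$.
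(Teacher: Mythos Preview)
Your argument handles the strict inequality $d_0 < d_{\min}$ correctly via the integral-closure degree count, but as the paper itself remarks right after the corollary, that case is trivial; the content of the statement is precisely the boundary case $d_0 = d_{\min}$. There your characteristic-zero argument has a genuine gap. From a relation $a f_0 = \sum_{j\in J} a_j f_j$ in $R^{+\text{GR}}$ with $a, a_j$ homogeneous of the same small positive degree, you assert that one can ``extract a $k$-linear combination $f_0 = \sum_{j\in J} \lambda_j f_j$'', but you give no mechanism for doing so. The elements $a, a_j$ live in $(R^{+\text{GR}})_{\nu(a)}$, not in $k$, and there is no reason the $k$-linearly independent elements of $R_{d_0}$ should remain independent over the graded piece $(R^{+\text{GR}})_{\nu(a)}$ after multiplication into $(R^{+\text{GR}})_{\nu(a)+d_0}$; your own witness example $xy$ over $k[x^2,y^2]$ already shows that algebraic-independence-type information in $R$ does not survive passage to integral extensions. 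No linear functional on $R_{d_0}$ extends to anything useful on $R^{+\text{GR}}$, and ``$k$ algebraically closed'' does not help here. The same problem recurs, compounded, in your non-graded sketch: for $I^\dagger$ the multipliers $a$ lie in $R^+$, not in $R^{+\text{GR}}$, so ``taking the grading valuation on $R^+$'' does not produce homogeneous $a_j$ and the degree count on the coefficients cannot even be set up.

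The paper's proof avoids integral closure entirely and instead uses the sharper containment of (graded) dagger closure in \emph{solid} closure (Corollaries \ref{gradeddaggerinsolidgeneral} and \ref{daggerinsolid}), after passing to a finite extension where $R(1)^\sim$ is globally generated. The exclusion bound $d_0 \leq \min_i d_i \Rightarrow f_0 \in I$ is then imported from \cite[Corollary 4.5]{brennertightproj}, a geometric result about the top local cohomology of forcing algebras in terms of vector bundles on $\Proj R$. That input is doing real work in the boundary case and is what your attempted linear-algebra extraction would have to replace.
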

\begin{proof}
Passing to a finite ring extension we may assume that $R$ is normal and that $R(1)^\sim$ on $\Proj R$ is globally generated (hence invertible).
If $f_0$ is contained in the graded dagger closure of $(f_1, \ldots, f_n)$ then we must have $f_0 \in (f_1, \ldots, f_n)^\star$ by Corollary \ref{gradeddaggerinsolidgeneral}. But this is only possible if $f_0 \in (f_1, \ldots, f_n)$ by virtue of \cite[Corollary 4.5]{brennertightproj} (here we need the conditions on $R(1)^\sim$). The same argument works for dagger closure if we use Corollary \ref{daggerinsolid} instead of Corollary \ref{gradeddaggerinsolidgeneral}.
\end{proof}

\noindent We also note that this result is trivial if $d_0 < \min_i d_i$.

We now compare (graded) dagger closure to integral closure in one dimensional rings.

\begin{Le}
\label{integraldim1}
Let $R$ be a one-dimensional regular noetherian domain and $I$ an ideal in $R$. Then $\bar{I} = I$.
\end{Le}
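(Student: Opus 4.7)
My plan is to reduce to the case of a DVR and then conclude by a direct valuation computation. A one-dimensional regular noetherian domain is a Dedekind domain, so for every maximal ideal $\mathfrak{m}$ the localization $R_\mathfrak{m}$ is a DVR. Since integral closure of ideals is compatible with localization (see \cite{swansonhunekeintegralclosure}), one has $\bar{I} = I$ as soon as $\overline{IR_\mathfrak{m}} = IR_\mathfrak{m}$ for every maximal $\mathfrak{m}$; it therefore suffices to prove that every ideal in a DVR is integrally closed.

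So let $(V, \pi V, v)$ be a DVR and let $J = (\pi^n)$ be a nonzero proper ideal. If $x \in V$ is integral over $J$, there is an equation $x^k + a_1 x^{k-1} + \cdots + a_k = 0$ with $a_i \in J^i = (\pi^{ni})$. Rearranging gives
\[
k\,v(x) \;=\; v(x^k) \;\geq\; \min_{1 \leq i \leq k}\bigl(ni + (k-i)\,v(x)\bigr).
\]
Selecting an index $i$ attaining this minimum and cancelling $(k-i)\,v(x)$ from both sides yields $i\cdot v(x) \geq n\,i$, so $v(x) \geq n$ and $x \in J$. Hence every ideal in $V$ is integrally closed, and the reduction above finishes the proof.

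No real obstacle is present; the one external ingredient is the standard compatibility of integral closure of ideals with localization in the Noetherian setting, and the DVR computation is entirely elementary.
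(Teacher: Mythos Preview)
Your proof is correct and follows essentially the same route as the paper: reduce to the local case via the compatibility of integral closure with localization (the paper phrases this as $\bar{I} \subseteq \bigcap_P \overline{IR_P} = \bigcap_P IR_P = I$), and then settle the DVR case. The only difference is that the paper cites \cite[Proposition 10.2.3]{brunsherzog} for the local step, whereas you carry out the valuation computation explicitly.
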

\begin{proof}
If $R$ is local then the assertion follows from \cite[Proposition 10.2.3]{brunsherzog}. In general, we have $\bar{I} \subseteq \bigcap_P \bar{I}R_P = \bigcap_P IR_P = I$, where the intersection runs over all primes $P \in \Spec R$ and the latter equality holds since $I^\sim$ is a sheaf on $\Spec R$.
\end{proof}

\begin{Ko}
\label{gradeddaggerintegralcl}
Let $R$ be an $\mathbb{N}$-graded ring of dimension one which is finitely generated over a field $R_0$ and let $I \subseteq R$ an ideal. Then $\bar{I} = I^{\dagger \text{GR}}$.
\end{Ko}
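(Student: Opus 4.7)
The plan is to establish the two inclusions separately, treating the domain case first and then reducing the non-domain case to it.

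For the inclusion $I^{\dagger \text{GR}} \subseteq \bar I$ when $R$ is a domain, observe that in dimension one every nonzero ideal has height $1 = \dim R$, so Corollary \ref{gradedaggerinintegral} applies directly; the case $I = 0$ is trivial since $R^{+\text{GR}}$ is a domain and therefore $0^{\dagger \text{GR}} = 0 = \bar 0$.

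For the reverse inclusion $\bar I \subseteq I^{\dagger \text{GR}}$ in the domain case, let $\tilde R$ be the normalisation of $R$ in its field of fractions. Because $R$ is finitely generated over a field it is excellent, so $\tilde R$ is a finite $R$-module. It inherits a $\mathbb{Q}_{\geq 0}$-grading extending the grading of $R$, it is a one-dimensional noetherian normal domain (hence regular), and it embeds into $R^{+\text{GR}}$ by maximality of the latter. Any integral equation for $f \in \bar I$ over $I$ remains an integral equation in $\tilde R$ with coefficients in the appropriate powers of $I\tilde R$, so $f \in \overline{I\tilde R}$. By Lemma \ref{integraldim1} applied to the one-dimensional regular domain $\tilde R$, we have $\overline{I\tilde R} = I\tilde R$. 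Hence $f \in I\tilde R \subseteq IR^{+\text{GR}}$, and taking $a = 1 \in R^{+\text{GR}}$ (with $\nu(1) = 0 < \eps$ for every $\eps > 0$) witnesses $f \in I^{\dagger \text{GR}}$.

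For a general $\mathbb{N}$-graded $R$ of dimension one, we reduce to the domain case via the two identities
\[
I^{\dagger \text{GR}} = \bigcap_{P} \pi_P^{-1}\bigl((IR/P)^{\dagger \text{GR}}\bigr), \qquad \bar I = \bigcap_{P} \pi_P^{-1}\bigl(\overline{IR/P}\bigr),
\]
where $P$ ranges over the (homogeneous) minimal primes of $R$ and $\pi_P\colon R \to R/P$ is the quotient map. The first identity is the definition of graded dagger closure for non-domains given in Definition \ref{Defgradedagger}; the second is a standard fact obtained by multiplying together the integral equations furnished modulo each $P_i$ and using that the product of the minimal primes is nilpotent (so a suitable power of the resulting equation is the desired integral equation in $R$). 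Since each $R/P$ is a graded domain of dimension at most one and the domain case has just been settled, the two intersections agree, giving $\bar I = I^{\dagger \text{GR}}$.

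The only step where I anticipate anything nontrivial is the reduction identity $\bar I = \bigcap_P \pi_P^{-1}(\overline{IR/P})$ for non-reduced $R$; everything else is essentially an unwinding of definitions together with Lemma \ref{integraldim1} and Corollary \ref{gradedaggerinintegral}.
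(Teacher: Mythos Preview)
Your argument for $\bar I \subseteq I^{\dagger\text{GR}}$ via the normalisation is fine and essentially matches the paper. The gap is in the other inclusion. You invoke Corollary \ref{gradedaggerinintegral} to get $I^{\dagger\text{GR}} \subseteq \bar I$, but that corollary requires $I$ to be \emph{homogeneous} (and of height $\dim R$); the present statement imposes no homogeneity hypothesis on $I$. In a one-dimensional graded domain there are plenty of non-homogeneous ideals, so this step does not go through as written, and nothing in the chain leading to Corollary \ref{gradedaggerinintegral} (via Corollary \ref{gradeddaggerinsolidgeneral} and the remark restricting to $R_+$) applies to such ideals.

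The paper circumvents this by staying with the normalisation for \emph{both} inclusions. Writing $S$ for the normalisation, one has $I^{\dagger\text{GR}} \subseteq (IS)^{\dagger\text{GR}} \cap R = IS \cap R$, the last equality by Corollary \ref{regulargradeddagger}, which holds for \emph{every} ideal in a graded polynomial ring (and $S$, being one-dimensional normal $\mathbb{N}$-graded with $S_0$ a field, is such a ring). One then checks $IS \cap R \subseteq \bar I$ by an elementary integral-equation argument: writing $f = \sum s_i t_i$ with $s_i \in S$, $t_i \in I$, an integrality equation for $s_i$ over $R$ is converted into an integrality equation for $s_i t_i$ over $I$. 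Your normalisation setup is already in place, so patching the gap amounts to replacing the appeal to Corollary \ref{gradedaggerinintegral} with this direct computation.
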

\begin{proof}
We may reduce to the domain case. This immediately follows from the definition for graded dagger closure and from \cite[Proposition 10.2.2 (c)]{brunsherzog} for integral closure.
Denote the normalisation of $R$ by $S$. We then have $I^{\dagger \text{GR}} \subseteq (IS)^{\dagger \text{GR}} \cap R = IS \cap R$ by Corollary \ref{regulargradeddagger}. Since the inclusion $IS \cap R \subseteq I^{\dagger \text{GR}}$ is immediate equality holds.

Furthermore, we have $\bar{I} \subseteq \overline{IS} \cap R = IS \cap R$ by Lemma \ref{integraldim1}. If $f \in IS \cap R$ then write $f = \sum_i s_it_i$, where $t_i \in I$ and $s_i \in S$. As $\bar{I}$ is an ideal we only need to show that $s_i t_i \in \bar{I}$ for each $i$. So fix $i$ and omit the index. Since $R \subseteq S$ is integral we have an equation $s^m + a_1 s^{m-1} + \ldots + a_{m-1}s + a_m = 0$ with $a_i \in R$. Therefore, $(st)^m + a_1t (st)^{m-1} + \ldots + a_{m-1}t^{m-1} (st) + a_m t^m = 0$ yields that $f \in \bar{I}$.
\end{proof}

\begin{Ko}
Let $R$ be a one-dimensional noetherian domain containing a field $k$ such that the normalisation is finite over $R$ (e.\,g.\ $R$ excellent or, more generally, Japanese\footnote{See \cite[Chapitre 0, D\'efinition 23.1.1]{EGAIV} for a definition of Japanese rings.}). Then $I^\dagger = \bar{I}$ for any ideal $I$.
\end{Ko}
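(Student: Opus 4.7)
The strategy is to mimic Corollary \ref{gradeddaggerintegralcl} in the ungraded setting, though with slightly less machinery: by Proposition \ref{daggerintegral} the inclusion $I^\dagger \subseteq \bar{I}$ is already automatic, so only the reverse inclusion $\bar{I} \subseteq I^\dagger$ needs to be established.

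To set this up, let $S$ denote the normalisation of $R$. By the finite-normalisation hypothesis $S$ is module-finite over $R$, hence noetherian; being a one-dimensional normal noetherian domain it is automatically regular. Applying Lemma \ref{integraldim1} to the ideal $IS \subseteq S$ gives $\overline{IS} = IS$. Since any integral dependence relation for an element over $I$ is a fortiori one over $IS$, I obtain
\[
\bar{I} \subseteq \overline{IS} \cap R = IS \cap R.
\]

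To finish, I would observe that $S \subseteq R^+$ because $S$ is integral over $R$, so $IS \subseteq IR^+$, and in particular every $f \in \bar{I}$ already lies in $IR^+$. Then for any rank-at-most-one valuation $\nu$ on $R^+$ and any $\eps > 0$ the element $a = 1 \in R^+$ witnesses membership in $I^\dagger$: it satisfies $\nu(1) = 0 < \eps$ and $af = f \in IR^+$. Hence $\bar{I} \subseteq I^\dagger$, which together with Proposition \ref{daggerintegral} gives equality.

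I do not anticipate a genuine obstacle: the argument is formal once the normalisation $S$ has been identified as a noetherian one-dimensional regular domain, which is precisely what the finiteness of the normalisation buys us. It is worth noting that this proof appeals to neither Theorem \ref{daggerregularclose} nor the hypothesis that $R$ contain a field; the latter is presumably retained only for consistency with the surrounding results in the paper.
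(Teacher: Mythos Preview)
Your proof is correct. Both you and the paper pass to the normalisation $S$, invoke Lemma \ref{integraldim1} to obtain $\overline{IS}=IS$, and then observe that $IS\cap R\subseteq I^\dagger$ because $S\subseteq R^+$ already forces $f\in IR^+$ (so the multiplier $a=1$ works). The difference lies in the other inclusion $I^\dagger\subseteq\bar I$. The paper's intended adaptation of Corollary~\ref{gradeddaggerintegralcl} goes through $I^\dagger\subseteq (IS)^\dagger\cap R=IS\cap R$, where the equality $(IS)^\dagger=IS$ comes from Theorem~\ref{daggerregularclose} applied to the regular ring $S$; it then separately proves $IS\cap R=\bar I$ via an explicit integral-dependence computation. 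You instead cite Proposition~\ref{daggerintegral} directly for $I^\dagger\subseteq\bar I$, bypassing both the main theorem and the auxiliary step $IS\cap R\subseteq\bar I$. Your route is genuinely more elementary: it does not invoke big Cohen--Macaulay algebras, and as you correctly note it does not use the hypothesis that $R$ contain a field. The paper's route has the minor advantage of exhibiting the common value $IS\cap R$ of all three quantities, but at the cost of appealing to the main theorem for a statement that, as you show, does not need it.
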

\begin{proof}
See the proof of Corollary \ref{gradeddaggerintegralcl}. Also note that for principal ideals this is already stated in Proposition \ref{daggerintegral}.
\end{proof}

In the remainder of this article we briefly address the relation of our definition of dagger closure with Heitmann's full rank one closure (see \cite[Definition after Remark 1.3]{heitmannplusextended}) which we shall recall here for the convenience of the reader: 

\begin{Def}
\label{DefConvenience}
Let $R$ be a domain and $I$ an ideal in $R$. Then the \emph{full rank one closure} of $I$ is given by the set of elements $f \in R$ such that for every valuation $\nu$ on $R^+$ of rank at most one, every prime number $p$, every positive integer $n$ and every $\eps > 0$ there exists $a \in R^+$ with $\nu(a) < \eps$ such that $af \in (I, p^n)R^+$.
\end{Def}

Note that Definition \ref{dagger} and Heitmann's definition coincide in positive characteristic. In particular, we have

\begin{Ko}
Let $R$ be a domain of characteristic $p > 0$ essentially of finite type over an excellent local ring. Then Heitmann's full rank one closure coincides with tight closure.
\end{Ko}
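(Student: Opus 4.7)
The plan is to exploit the remark immediately preceding the corollary, namely that Definition \ref{dagger} and Heitmann's Definition \ref{DefConvenience} coincide in positive characteristic, and then invoke Proposition \ref{daggertightexcellent}.

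First, I would verify the coincidence of the two closures in characteristic $p > 0$. Since $R$ has characteristic $p$, we have $p = 0$ in $R$, hence $p^n = 0$ in $R^+$ for every $n \geq 1$. Consequently the ideal $(I, p^n)R^+$ appearing in Heitmann's definition is simply $IR^+$, and the condition ``$af \in (I, p^n)R^+$ for all primes $p$ and positive integers $n$'' collapses to ``$af \in IR^+$''. Thus the full rank one closure of $I$ coincides with the dagger closure of $I$ as in Definition \ref{dagger}.

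Then I would apply Proposition \ref{daggertightexcellent}, which, under exactly the hypotheses of this corollary (domain of characteristic $p > 0$ essentially of finite type over an excellent local ring), asserts that dagger closure agrees with tight closure. Composing the two identifications yields the claim.

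There is no real obstacle here: the statement is essentially a formal concatenation of the preceding remark with Proposition \ref{daggertightexcellent}. The only point requiring a sentence of justification is that the additional generators $p^n$ in Heitmann's definition become trivial in positive characteristic, which is the reason the two definitions agree in that setting.
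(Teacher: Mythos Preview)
Your proposal is correct and follows exactly the paper's approach: the paper simply says the result is immediate from Proposition \ref{daggertightexcellent}, relying on the sentence just before the corollary that the two definitions coincide in positive characteristic. Your only slight imprecision is the overloaded use of $p$; to be fully clean you might note that for primes $q$ different from the characteristic, $q$ is a unit in $R$ so $(I,q^n)R^+ = R^+$ and those conditions are vacuous, while for the characteristic prime one gets $(I,p^n)R^+ = IR^+$ as you say.
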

\begin{proof}
This is immediate from Proposition \ref{daggertightexcellent}.
\end{proof}

 Obviously, full rank one closure is not applicable in equal characteristic zero. In this case Heitmann proposed a different definition which reduces to the mixed characteristic case by looking at a finitely generated $\mathbb{Z}$-algebra containing all relevant data (see \cite[Definition after Lemma 1.6]{heitmannplusextended}).

As mentioned before we do not have any results in mixed characteristic. Nevertheless, we propose the following alternative definition in the local case which is the same as dagger closure if the ring contains a field and coincides with Heitmann's rank one closure in mixed characteristic.

\begin{Def}
Let $(R,\mathfrak{m},k)$ be a local domain, $I$ an ideal and $\chara k = p \geq 0$. Then we define the \emph{local dagger closure} $I^\dagger$ of $I$ as the set of elements $f \in R$ such that for every $\eps > 0$, every $n \in \mathbb{N}$ and every valuation $\nu$ of rank at most one on $R^+$  there exists $a \in R^+$ with $\nu(a) < \eps$ and $af \in (I, p^n)R^+$.
\end{Def}

Of course, if  $R$ is a local domain of mixed characteristic we have that any prime $q \neq \chara k$ is invertible in $R$. Hence, we only need to look at $p = \chara k$ in the full rank one closure so that the two definitions agree.

It should also be possible to extend these definitions (and most of the results) to rings which are not domains by reduction to the domain case (i.\,e.\ via killing minimal primes) but we did not pursue this (see also \cite{heitmannplusextended}). We also made no systematic effort of extending Heitmann's results concerning persistence to dagger closure in equal characteristic zero.

\section*{Acknowledgements}
We would like to thank the referee for a careful reading of an earlier draft of this article and for pointing out a major simplification in the proof of our main theorem.

\bibliography{bibliothek.bib}
\bibliographystyle{amsplain}
\end{document}